\theoremstyle{remark}
\newtheorem{remark}{Remark}
\theoremstyle{plain}
\newtheorem{theorem}{Theorem}
\newtheorem{statement}{Proposition}
\newcommand{\be}{{\mathrm b}}
\newcommand{\en}{{\mathrm e}}
\newcommand{\A}{{\mathcal A}}
\newcommand{\Inv}{{\mathcal I}}
\newcommand{\R}{{\mathbb R}}
\newcommand{\vect}{{\mathbf v}}
\newcommand{\wect}{{\mathbf w}}
\newcommand{\xect}{{\mathbf x}}
\newcommand{\yect}{{\mathbf y}}
\newcommand{\kect}{{\mathbf k}}
\newcommand{\cect}{{\mathbf c}}
\newcommand{\Aect}{{\mathbf A}}
\begin{document}

\title{Interval Rearrangement Ensembles}
\author{Alexey TEPLINSKY
\thanks{Institute of Mathematics, Natl.\ Acad.\ Sc.\ Ukraine; e-mail:\ teplinsky.a@gmail.com}}
\maketitle

\begin{abstract}
We introduce a new concept of interval rearrangement ensembles (IRE), which is a generalization of interval exchange transformations (IET). This construction expands the space of IETs in accordance with the natural duality that we pinpoint. Induction of Rauzy\,--\,Veech kind is applicable to IREs. It is conjugate to the reverse operation by the duality mentioned above. A natural extension of an IRE is associated with two transversal flows on a flat translation surface with branching points.
\end{abstract}

\section{Introduction}\ \
Interval exchange transformations (IETs) are one-dimensional dynamical systems, which are widely studied since the 1970s starting with works~\cite{Keane75, Veech78, Rauzy79, Keane-Rauzy80, Masur82, Veech82}. A simple and graphic presentation of the subject can be found, for ex., in~\cite{Viana06}.

But our motivation for generalizing this construction comes from recent achievements in investigation of circle diffeomorphisms with breaks~\cite{TeplinskyKhanin04rus, Teplinsky08ukr, KhaninTeplinsky13, CunhaSmania13, CunhaSmania14}, which can be considered as a special case of nonlinear IETs. The renormalization approach leads to studying dynamics of action of the renormalization operator on a limiting finite-dimensional area of parameters. One of the key observations, which made the results of paper~\cite{KhaninTeplinsky13} possible, was its authors' discovery of an important symmetry (represented in the form of a simple involution on the limiting set mentioned above, see~\cite{Teplinsky08ukr} for the details) that conjugates the renormalization operator to its reverse. This time reversibility allowed the authors to prove hyperbolicity of renormalization. They called this symmetry ``duality''.

While the case of a single break studied in~\cite{KhaninTeplinsky13} led to consideration of an (non-linear) exchange of only two intervals, which is an IET with a very simple discrete component, the case of multiple breaks required understanding of how do much more complex IETs on multiple segments transform under renormalization procedures. We managed to extend the duality found for the simplest case to the space of all rotational IETs (this is a class of multi-segment IETs induced by irrational circle rotations). This duality reverses induction on descrete data, on linear natural extensions, and on linear-fractional IETs.

In the case of a rotational IET, the dual system is a rotational IET as well. But what will happen if one applies the operation of duality to non-rotational IET? The answer to this question appeared somehow unexpected: the result is not a IET, and not even a dynamical system, but rather a more general object that we named ``interval rearrangement ensemble'' (IRE). The space of all such ensembles is closed w.r.t.\ our duality and appears to be very natural in many senses, which we explain in the present paper.

Let us describe briefly the structure of our paper. In Secs.~2 and~3 we present classical IETs (on a single segment) and IETs on multiple segments, and in Sec.~4 we give the definition of an IRE. Then we explain what are the dynamical systems corresponding to IREs (Sec.~5), calculate dimensions of their real components (Sec.~6), define the elementary steps of induction (Sec.~7), and the time-reversing duality (Sec.~8). Natural extensions for IREs are given in Sec.~\ref{sect:extension}, and the corresponding translation surfaces and flows are presented in Sec..~\ref{sect:surfaces} (which also contains the detailed analysis of one classical example). In Sec.~\ref{sect:conclusions} we sum up the obtained results.

\section{Classical IETs}
\label{sect:single}

Below is the classical definition of an IET.

Let us be given an \emph{alphabet} $\A$ of $d\ge1$ symbols, two discrete bijective mappings $\pi_\be$ and $\pi_\en,$ acting from $\A$ onto the set $\{1, \ldots, d\},$ and a \emph{vector of lengths} $\vect = (v_\alpha)_{\alpha\in\A}\in\R^\A_ + .$ The \emph{interval exchange transformation} with this data $(\pi_\be, \pi_\en, \vect)$ is a one-dimensional dynamical system on the half-open segment $J = \left[0, \sum_{\alpha\in\A}v_\alpha\right),$ which is given by the discontinuous bijective map
\begin{equation}
\label{eq:IET}
f: x\mapsto x-x_\be(\alpha) + x_\en(\alpha)
\quad
\text{for}
\quad
x\in I_{\alpha\be}, \alpha\in\A,
\end{equation}
where $I_{\alpha\be} = [x_\be(\alpha), x_\be(\alpha) + v_{\alpha}),$ $x_\be(\alpha) = \sum_{\beta\colon \pi_\be(\beta)<\pi_\be(\alpha)}v_{\beta},$ $x_\en(\alpha) = \sum_{\beta\colon \pi_\en(\beta)<\pi_\en(\alpha)}v_{\beta}.$ The subscripts ``$\be$'' and  ``$\en$'' here come from the words ``beginning'' and ``ending'' respectively.

It is easy to check that this map splits the segment $J$ into $d$ \emph{beginning intervals} $I_{\alpha\be}$
(here and in what follows we use the terms ``segment'' and ``interval'' non-interchangeably, calling by them objects of logically different matter) of lengths $v_\alpha,$ $\alpha\in\A,$ ordered in accordance with $\pi_\be$ (meaning that the beginning interval $I_{\alpha\be}$ takes the position number $\pi_\be(\alpha)$ counting from left to right), and shifts each of them separately onto the corresponding \emph{ending intervals} $I_{\alpha\en} = [x_\en(\alpha), x_\en(\alpha) + v_{\alpha})$ of the same lengths, but now ordered in accordance with $\pi_\en.$ 

It is important to note that the data set $(\pi_\be, \pi_\en, \vect),$ which determines $(J, f),$ is a hybrid object consisting of a discrete component (or \emph{scheme}) $(\pi_\be, \pi_\en)$ and a real component~$\vect.$

The scheme of an IET is often being written in so-called two-row (or two-line) notation
\begin{equation}
\label{eq:two-rows-notation}
\begin{pmatrix}
\pi_\be^{-1}(1) & \pi_\be^{-1}(2) & \ldots & \pi_\be^{-1}(d)
\\[\medskipamount]
\pi_\en^{-1}(1) & \pi_\en^{-1}(2) & \ldots & \pi_\en^{-1}(d)
\end{pmatrix}.
\end{equation}
For ex., $\begin{pmatrix}
\alpha & \beta & \gamma & \delta
\\
\delta & \gamma & \beta & \alpha
\end{pmatrix}$ denotes the scheme of an IET that rearranges four intervals in the opposite order.

Note, that if one considers an IET as a single particular dynamical system, then the scheme above has no difference from, say, the scheme $\begin{pmatrix}
\beta & \alpha & \gamma & \delta
\\
\delta & \gamma & \alpha & \beta
\end{pmatrix}.$ 
But for those manipulations with IETs, which we describe below, it is useful to follow the symbolic labels of the intervals, and it is worthy to consider the mathematical object of IET not just as a one-dimensional dynamical system $(J, f),$, but namely as the data set $(\pi_\be, \pi_\en, \vect),$ which is an element of certain parameter space, on which meta-dynamical systems act.

One of the classical meta-systems acting on the space of IETs is the Rauzy\,--\,Veech induction.
It transforms the original IET $(\pi_\be, \pi_\en, \vect)$ into the new one $(\pi'_\be, \pi'_\en, \vect'),$ which is, in the dynamical sense, a map of the first return of trajectories onto the reduced segment determined by comparing lengths of the rightmost beginning interval and the rightmost ending interval, and cutting away the shorter one of them from the original segment $J.$ In formulas: if $v_{\pi_\be^{-1}(d)}<v_{\pi_\en^{-1}(d)},$ then the segment of new IET is $J' = \left[0, \sum_{\alpha\in\A\backslash\{\pi_\be^{-1}(d)\}}v_\alpha\right);$ if the opposite inequality holds, then $J' = \left[0, \sum_{\alpha\in\A\backslash\{\pi_\en^{-1}(d)\}}v_\alpha\right).$ One may easily calculate the first return map onto this segment.

In the case when $v_{\pi_\be^{-1}(d)}<v_{\pi_\en^{-1}(d)}$:
\begin{enumerate}
\item[] 
$\pi'_\be(\alpha) = \pi_\be(\alpha)$ for all $\alpha$ such that $\pi_\be(\alpha)\le\pi_\be(\pi_\en^{-1}(d)),$

\item[] 
$\pi'_\be(\alpha) = \pi_\be(\alpha) + 1$ for all $\alpha$ such that $\pi_\be(\pi_\en^{-1}(d))<\pi_\be(\alpha)<d,$ 

\item[] 
$\pi'_\be(\pi_\be^{-1}(d)) = \pi_\be(\pi_\en^{-1}(d)) + 1,$

\item[] 
$\pi'_\en(\alpha) = \pi_\en(\alpha)$ for all $\alpha\in\A,$

\item[] 
$v'_{\pi_\en^{-1}(d)} = v_{\pi_\en^{-1}(d)}-v_{\pi_\be^{-1}(d)},$

\item[] 
$v'_{\alpha} = v_\alpha$ for all $\alpha\ne\pi_\en^{-1}(d).$
\end{enumerate}

In the case when $v_{\pi_\be^{-1}(d)}>v_{\pi_\en^{-1}(d)}$:
\begin{enumerate}
\item[] 
$\pi'_\be(\alpha) = \pi_\be(\alpha)$ for all $\alpha\in\A,$

\item[] 
$\pi'_\en(\alpha) = \pi_\en(\alpha)$ for all $\alpha$ such that $\pi_\en(\alpha)\le\pi_\en(\pi_\be^{-1}(d)),$

\item[] 
$\pi'_\en(\alpha) = \pi_\en(\alpha) + 1$ for all $\alpha$ such that $\pi_\en(\pi_\be^{-1}(d))<\pi_\en(\alpha)<d,$

\item[] 
$\pi'_\en(\pi_\en^{-1}(d)) = \pi_\en(\pi_\be^{-1}(d)) + 1,$

\item[] 
$v'_{\pi_\be^{-1}(d)} = v_{\pi_\be^{-1}(d)}-v_{\pi_\en^{-1}(d)},$

\item[] 
$v'_{\alpha} = v_\alpha$ for all $\alpha\ne\pi_\be^{-1}(d).$
\end{enumerate}

This transformation of an IET is easier to comprehend in two-row notation (\ref{eq:two-rows-notation}): if the interval with label $\pi_\be^{-1}(d)$ is strictly shorter than the interval with label $\pi_\en^{-1}(d),$ then in the upper row the symbol $\pi_\be^{-1}(d)$ is removed from its rightmost position and inserted into the same row straight to the right of $\pi_\en^{-1}(d),$ and the lower row does not change; if the interval with label $\pi_\be^{-1}(d)$ is strictly longer than the interval with label $\pi_\en^{-1}(d),$ then the upper row does not change, while in the lower row the symbol $\pi_\en^{-1}(d)$ is moved from its rightmost position into the position straight to the right of $\pi_\be^{-1}(d)$ in the same row. After this change in the discrete component, the length of the longest one among those two intervals decreases by the value of the length of the shorter one.

\begin{remark}
The transformation described above is not defined in the case $v_{\pi_\be^{-1}(d)} = v_{\pi_\en^{-1}(d)}.$ When applying Rauzy\,--\,Veech induction it is assumed that such a case never happens. Usually this assumption is guaranteed by imposing the Keane condition~\cite{Keane75} formulated as follows: the trajectories of all break points $x_\be(\alpha),$ $\alpha\in\A\backslash\{\pi_\be^{-1}(1)\},$ are mutually disjoint. The Keane condition also implies minimality of the IET $(J, f)$: each trajectory of this dynamical system is everywhere dense on the segment $J.$
\end{remark}

\section{Multi-segment IETs}
\label{sect:multi}
Classical IETs, which were defined in the previous section, act on a single segment, and the first step in their generalization is removing this restriction, i.e., considering IETs on multiple segments. We also get rid of pinning down some edge at the point~0. In what follows, we will call multi-segment IETs just IETs.

Consider an alphabet $\A$ of $d\ge1$ symbols and two bijective mappings $\pi_\be$ and $\pi_\en,$ acting from $\A$ onto $\{1, \ldots, d\},$ like we did before. But now add to this data two ordered sets of $N\ge1$ positive integers $\kect_\be = (k_{1\be}, \ldots, k_{N\be})$ and $\kect_\en = (k_{1\en}, \ldots, k_{N\en}),$ each of them summing up to $d,$ i.e,  $ \sum_{s = 1}^{N}k_{s\be} = \sum_{s = 1}^{N}k_{s\en} = d.$
Let us have a vector of lengths $\vect = (v_\alpha)_{\alpha\in\A}\in\R_ + ^\A$ satisfying
$$
\sum_{i = 1}^{k_{s\be}}v_{\pi_\be^{-1}(k_{1\be} + \ldots + k_{(s-1)\be} + i)} = \sum_{i = 1}^{k_{s\en}}v_{\pi_\en^{-1}(k_{1\en} + \ldots + k_{(s-1)\en} + i)},
\quad
1\le s\le N.
$$
Denote the sums above as $\ell_s,$ $1\le s\le N.$ It is easy to see that if any $N-1$ of the equalities above are true, then they are all true, because summing up all these $N$ equalities gives an identity with the expression $\sum_{\alpha\in\A}v_\alpha$ in both sides.

For a given set of segments $J_s = [A_s, B_s)$ of lengths $B_s-A_s = \ell_s,$ $1\le s\le N,$ an IET (on multiple segments) with the data $(\pi_\be, \pi_\en, \kect_\be, \kect_\en, \vect)$ is defined on the disjoint union $\bigsqcup_{s = 1}^NJ_s$ as a map that splits the $N$ segments $J_s$ into $d$ beginning intervals $I_{\alpha\be}$ of lengths $v_\alpha,$ $\alpha\in\A,$ ordered in accordance with $\pi_\be$ and $\kect_\be$ (meaning that the interval $I_{\alpha\be}$ takes the position number $\pi_\be(\alpha)-(k_{1\be} + \ldots + k_{(s-1)\be}),$ counting from left to right, on the segment $J_s,$ $1\le s\le N$), and shifts each of them separately onto the corresponding ending intervals $I_{\alpha\en}$ of the same lengths $v_\alpha,$ $\alpha\in\A,$ ordered in accordance with  $\pi_\en$ and $\kect_\en.$ Formally, this map is given by the same expression (\ref{eq:IET}), but now the left endpoint of the beginning segment $I_{\alpha\be}$ is given as
$x_\be(\alpha)  = A_{s_\be(\alpha)} + \sum_{\beta\colon k_{1\be} + \ldots + k_{({s_\be(\alpha)}-1)\be}<\pi_\be(\beta)<\pi_\be(\alpha)}v_{\beta},$ where $s_\be(\alpha) = \min\{s\colon \pi_\be(\alpha) \le k_{1\be} + \ldots + k_{s\be}\}$ is the number of the segment containing this beginning interval, while the left endpoint of the ending interval $I_{\alpha\en}$ is given as $x_\en(\alpha) = A_{s_\en(\alpha)} + \sum_{\beta\colon k_{1\en} + \ldots + k_{({s_\en(\alpha)}-1)\en}<\pi_\en(\beta)<\pi_\en(\alpha)}v_{\beta},$ where $s_\en(\alpha) = \min\{s\colon \pi_\en(\alpha) \le k_{1\en} + \ldots + k_{s\en}\}$ is the number of the segment containing this ending interval.

I is easy to see that for $N = 1$ and $A_1 = 0$ this definition is equivalent to the definition of a classical IET from the previous section. It is also worth mentioning that in the case $N\ne1$ the discrete component of the IET data $(\pi_\be, \pi_\en, \kect_\be, \kect_\en)$ is what elevates the complexity of the whole construction to the next level.

This scheme is naturally visualized with the two-row notation as follows:
\begin{gather}
\left(
\begin{bmatrix}
\pi_\be^{-1}(1) & \ldots & \pi_\be^{-1}(k_{1\be})
\\[\medskipamount]
\pi_\en^{-1}(1) & \ldots & \pi_\en^{-1}(k_{1\en}) \end{bmatrix}
\begin{bmatrix}
\pi_\be^{-1}(k_{1\be} + 1) & \ldots & \pi_\be^{-1}(k_{1\be} + k_{2\be})
\\[\medskipamount]
\pi_\en^{-1}(k_{1\en} + 1) & \ldots & \pi_\en^{-1}(k_{1\en} + k_{2\en}) \end{bmatrix} 
\ldots \right.
\notag \\
\left.\ldots
\begin{bmatrix}
\pi_\be^{-1}(k_{1\be} + \ldots + k_{(N-1)\be} + 1) & \ldots & \pi_\be^{-1}(d)
\\[\medskipamount]
\pi_\en^{-1}(k_{1\en} + \ldots + k_{(N-1)\en} + 1) & \ldots & \pi_\en^{-1}(d) \end{bmatrix}\right).
\label{eq:two-row-notation-multi}
\end{gather}
In this form of notation, $N$ blocks in square brackets correspond to splitting of segments $J_s,$ $1\le s\le N,$ into beginning (upper row) and ending (lower row) intervals with labels written there (from left to right, the same as the intervals themselves are positioned).

The Rauzy\,--\,Veech induction also must be generalized for the new construction. There is no sense anymore to keep the left endpoints of segments untouched, so it is only natural to allow cutting intervals from both sides, not from the right only---unlike how it happens in the classical Rauzy\,--\,Veech induction, described in the previous section. Presently we will define elementary induction steps of four types: ``cropping a beginning interval on the right'', ``cropping an ending interval on the right'', ``cropping a beginning interval on the left'', and ``cropping the ending interval on the left'', denoting these operations as $\Pi_{\alpha\beta}^{\mathrm{rb}},$ $\Pi_{\alpha\beta}^{\mathrm{re}},$ $\Pi_{\alpha\beta}^{\mathrm{lb}}$, and $\Pi_{\alpha\beta}^{\mathrm{le}}$ respectively. The superscripts ``r'' and ``l'' here come from the words ``right'' and ``left'', while $\alpha,$ $\beta,$ $\alpha\ne\beta,$ are labels of the beginning and ending intervals engaged in the operation, respectively.

The elementary induction steps $\Pi_{\alpha\beta}^{\mathrm{rb}}$ and $\Pi_{\alpha\beta}^{\mathrm{re}}$ are applicable only if for some $1\le s^*\le N$ the equalities $s_\be(\alpha) = s_\en(\beta) = s^*,$ $\pi_\be(\alpha) = k_{1\be} + \ldots + k_{s^*\be}$ and $\pi_\en(\beta)  = k_{1\en} + \ldots + k_{s^*\en}$ hold, i.e., both the beginning interval $I_{\alpha\be}$ and the ending interval $I_{\beta\en}$ are adjacent to the right endpoint of the segment $J_{s^*}.$ Similarly to the classical Rauzy\,--\,Veech induction, we crop the longest one among these two intervals by the length of the shorter one. Hence, in the case $v_\alpha>v_\beta,$ one can apply the operation $\Pi_{\alpha\beta}^{\mathrm{rb}},$ while in the case $v_\alpha<v_\beta$ the operation $\Pi_{\alpha\beta}^{\mathrm{re}}$ is applicable. For $v_\alpha = v_\beta$ these operations are not defined.

The steps $\Pi_{\alpha\beta}^{\mathrm{lb}}$ and $\Pi_{\alpha\beta}^{\mathrm{le}}$ are applicable only if $s_\en(\alpha) = s_\be(\beta) = s^*,$ $\pi_\be(\alpha) = k_{1\be} + \ldots + k_{(s^*-1)\be} + 1,$ $\pi_\en(\beta) = k_{1\en} + \ldots + k_{(s^*-1)\en} + 1$ for some $1\le s^*\le N,$ i.e., both the beginning $I_{\alpha\be}$ and the ending $I_{\beta\en}$ intervals are adjacent to the left endpoint of the segment $J_{s^*}.$ In the case $v_\alpha>v_\beta,$ one can apply $\Pi_{\alpha\beta}^{\mathrm{lb}},$ while in the case $v_\alpha<v_\beta,$ the operation $\Pi_{\alpha\beta}^{\mathrm{lb}}$ is applicable. For the case $v_\alpha = v_\beta$ the operations are not defined.

Let us denote the discrete data of an IET before applying an induction step as $(\pi_\be, \pi_\en, \kect_\be, \kect_\en),$ and after its application as $(\pi'_\be, \pi'_\en, \kect'_\be, \kect'_\en).$ We define the action of the elementary induction steps as follows (in each of the four cases we describe this action by words first, having in mind the two-row notation for the scheme, and then formally).

$\Pi_{\alpha\beta}^{\mathrm{rb}}$ moves the symbol $\beta$ from its rightmost position in the lower row of the $s^*$th block into a position straight to the right of the symbol $\alpha$ in the lower row of a block that contains it:
\begin{enumerate}
\item[]
$\pi'_\be = \pi_\be,$ $\kect'_\be = \kect_\be,$

\item[]
$\pi'_\en(\gamma) = \pi_\en(\gamma)$ for $\gamma$ such that either $\pi_\en(\gamma)<\pi_\en(\beta)$ and $\pi_\en(\gamma)\le\pi_\en(\alpha),$ or $\pi_\en(\gamma)>\pi_\en(\beta)$ and $\pi_\en(\gamma)>\pi_\en(\alpha),$

\item[]
$\pi'_\en(\gamma) = \pi_\en(\gamma) + 1$ for $\gamma$ such that $\pi_\en(\alpha)<\pi_\en(\gamma)<\pi_\en(\beta),$

\item[]
$\pi'_\en(\gamma) = \pi_\en(\gamma)-1$ for  $\gamma$ such that $\pi_\en(\beta)<\pi_\en(\gamma)\le\pi_\en(\alpha),$

\item[]
$\pi'_\en(\beta) = \pi_\en(\alpha) + 1$ if $\pi_\en(\alpha)<\pi_\en(\beta),$ otherwise $\pi'_\en(\beta) = \pi_\en(\alpha),$

\item[]
$k'_{s\en} = k_{s\en}$ for all $s\not\in\{s^*, s_\en(\alpha)\}$ or $s = s^* = s_\en(\alpha),$

\item[]
$k'_{s^*\en} = k'_{s^*\en}-1$ if $s^*\ne s_\en(\alpha),$

\item[]
$k'_{s_\en(\alpha)\en} = k'_{s_\en(\alpha)\en} + 1$ if $s^*\ne s_\en(\alpha).$
\end{enumerate}

$\Pi_{\alpha\beta}^{\mathrm{re}}$ moves the symbol $\alpha$ from its rightmost position in the upper row of the $s^*$th block into a position straight to the right of the symbol $\beta$ in the upper row of a block that contains it:
\begin{enumerate}
\item[]
$\pi'_\en = \pi_\en,$ $\kect'_\en = \kect_\en,$

\item[]
$\pi'_\be(\gamma) = \pi_\be(\gamma)$ for $\gamma$ such that either $\pi_\be(\gamma)<\pi_\be(\alpha)$ and $\pi_\be(\gamma)\le\pi_\be(\beta),$ or $\pi_\be(\gamma)>\pi_\be(\alpha)$ and $\pi_\be(\gamma)>\pi_\be(\beta),$

\item[]
$\pi'_\be(\gamma) = \pi_\be(\gamma) + 1$ for $\gamma$ such that $\pi_\be(\beta)<\pi_\be(\gamma)<\pi_\be(\alpha),$

\item[]
$\pi'_\be(\gamma) = \pi_\be(\gamma)-1$ for $\gamma$ such that $\pi_\be(\alpha)<\pi_\be(\gamma)\le\pi_\be(\beta),$

\item[]
$\pi'_\be(\alpha) = \pi_\be(\beta) + 1$ if $\pi_\be(\beta)<\pi_\be(\alpha),$ otherwise $\pi'_\be(\alpha) = \pi_\be(\beta),$

\item[]
$k'_{s\be} = k_{s\be}$ for all $s\not\in\{s^*, s_\be(\beta)\}$ or $s = s^* = s_\be(\beta),$

\item[]
$k'_{s^*\be} = k'_{s^*\be}-1$ if $s^*\ne s_\be(\beta),$

\item[]
$k'_{s_\be(\beta)\be} = k'_{s_\be(\beta)\be} + 1$ if $s^*\ne s_\be(\beta).$
\end{enumerate}

$\Pi_{\alpha\beta}^{\mathrm{lb}}$ moves $\beta$ from its leftmost position in the lower row of the $s^*$th block into a position straight to the left of $\alpha$ in the lower row of a block that contains it:
\begin{enumerate}
\item[]
$\pi'_\be = \pi_\be,$ $\kect'_\be = \kect_\be,$

\item[]
$\pi'_\en(\gamma) = \pi_\en(\gamma)$ for $\gamma$ such that either $\pi_\en(\gamma)<\pi_\en(\alpha)$ and $\pi_\en(\gamma)<\pi_\en(\beta),$ or $\pi_\en(\gamma)\ge\pi_\en(\alpha)$ and $\pi_\en(\gamma)>\pi_\en(\beta),$

\item[]
$\pi'_\en(\gamma) = \pi_\en(\gamma) + 1$ for $\gamma$ such that $\pi_\en(\alpha)\le\pi_\en(\gamma)<\pi_\en(\beta),$

\item[]
$\pi'_\en(\gamma) = \pi_\en(\gamma)-1$ for $\gamma$ such that $\pi_\en(\beta)<\pi_\en(\gamma)<\pi_\en(\alpha),$

\item[]
$\pi'_\en(\beta) = \pi_\en(\alpha)-1$ if $\pi_\en(\beta)<\pi_\en(\alpha),$ otherwise $\pi'_\en(\beta) = \pi_\en(\alpha),$

\item[]
$k'_{s\en} = k_{s\en}$ for all $s\not\in\{s^*, s_\en(\alpha)\}$ or $s = s^* = s_\en(\alpha),$

\item[]
$k'_{s^*\en} = k'_{s^*\en}-1$ if $s^*\ne s_\en(\alpha),$

\item[]
$k'_{s_\en(\alpha)\en} = k'_{s_\en(\alpha)\en} + 1$ if $s^*\ne s_\en(\alpha).$
\end{enumerate}

$\Pi_{\alpha\beta}^{\mathrm{le}}$ moves $\alpha$ from its leftmost position in the upper row of the $s^*$th block into a position straight to the left of $\beta$ in the upper row of a block that contains it:

\begin{enumerate}
\item[]
$\pi'_\en = \pi_\en,$ $\kect'_\en = \kect_\en,$

\item[]
$\pi'_\be(\gamma) = \pi_\be(\gamma)$ for $\gamma$ such that either $\pi_\be(\gamma)<\pi_\be(\beta)$ and $\pi_\be(\gamma)<\pi_\be(\alpha),$ or $\pi_\be(\gamma)\ge\pi_\be(\beta)$ and $\pi_\be(\gamma)>\pi_\be(\alpha),$

\item[]
$\pi'_\be(\gamma) = \pi_\be(\gamma) + 1$ for $\gamma$ such that $\pi_\be(\beta)\le\pi_\be(\gamma)<\pi_\be(\alpha),$

\item[]
$\pi'_\be(\gamma) = \pi_\be(\gamma)-1$ for $\gamma$ such that $\pi_\be(\alpha)<\pi_\be(\gamma)<\pi_\be(\beta),$

\item[]
$\pi'_\be(\alpha) = \pi_\be(\beta)-1$ if $\pi_\be(\alpha)<\pi_\be(\beta),$ otherwise $\pi'_\be(\alpha) = \pi_\be(\beta),$

\item[]
$k'_{s\be} = k_{s\be}$ for all $s\not\in\{s^*, s_\be(\beta)\}$ or $s = s^* = s_\be(\beta),$

\item[]
$k'_{s^*\be} = k'_{s^*\be}-1$ if $s^*\ne s_\be(\beta),$

\item[]
$k'_{s_\be(\beta)\be} = k'_{s_\be(\beta)\be} + 1$ if $s^*\ne s_\be(\beta).$
\end{enumerate}

It must be noted that for each of the four steps there is a possibility that $(\pi_\be, \pi_\en, \kect_\be, \kect_\en)  = (\pi'_\be, \pi'_\en, \kect'_\be, \kect'_\en),$ i.e., the discrete data is left totally unchanged (for the step $\Pi_{\alpha\beta}^{\mathrm{rb}}$ this happens when the symbol $\alpha$ in the lower row of the $s^*$th block is already located straight to the left of $\beta;$ similarly for other steps).

The real component of an IET may be defined as a combination of the vector of lengths $\vect \in\R_ + ^\A$ with the vector of the left endpoints of segments $\Aect = (A_1, \ldots, A_N)\in\R^N.$ It contains $d + N$ real numbers in total; they are not independent, however, and we will research the question of dimensions of corresponding spaces in Sec.~\ref{sect:dimensions}.

The changes in the real data from $(\vect, \Aect)$ to $(\vect', \Aect')$ due to the action of four elementary induction steps are written much simpler than the changes in the discrete data, namely:
\begin{enumerate}
\item[]
for $\Pi_{\alpha\beta}^{\mathrm{rb}}$ and $\Pi_{\alpha\beta}^{\mathrm{lb}}$ we have $v'_\alpha = v_\alpha-v_\beta;$ while for $\Pi_{\alpha\beta}^{\mathrm{re}}$ and $\Pi_{\alpha\beta}^{\mathrm{le}}$ we have $v'_\beta = v_\beta-v_\alpha,$ and all other lengths do not change;

\item[]
for $\Pi_{\alpha\beta}^{\mathrm{lb}}$ we have $A'_{s^*} = A_{s^*} + v_\beta,$ and for $\Pi_{\alpha\beta}^{\mathrm{le}}$ we have $A'_{s^*} = A_{s^*} + v_\alpha,$ and the left endpoints of all other segments but $J_{s^*}$ do not change; while for ``cropping on the right'' steps $\Pi_{\alpha\beta}^{\mathrm{rb}}$ and $\Pi_{\alpha\beta}^{\mathrm{re}}$ the left endpoints of all segments $J_s,$ $1\le s\le N,$ are left unchanged.
\end{enumerate}

Let us remark that the elementary induction steps act on a discrete component of IET data $(\pi_\be, \pi_\en, \kect_\be, \kect_\en)$ independently on the value of its real component $(\vect, \Aect).$

\section{Definition of an IRE}
\label{sect:definition}
In the following sections, the motivation for generalized construction that we describe below will become more clear, but now let us look at the discrete component of IET data from a different viewpoint.

To begin with, let us transform the two-row notation (\ref{eq:two-row-notation-multi}) into a one-row notation by arming the symbols of our alphabet from the upper and lower rows with letters  ``b'' and ``e'' respectively and writing them down as $N$ finite sequences, one for each segment $J_s$: first we write the symbols of the upper row from left to right, and after them (i.e., to the right of them) the symbols of the lower row, but from right to left. For ex., instead of $\begin{pmatrix}
\alpha & \beta &
\\
\gamma & \delta & \varepsilon
\end{pmatrix}$ we write $(\alpha\be, \beta\be, \varepsilon\en, \delta\en, \gamma\en).$

Accordingly to this form of notation, we code each segment $J_s,$ $1\le s\le N,$ which is a union of beginning intervals with labels (from left to right) $\pi_\be^{-1}(k_{1\be} + \ldots + k_{(s-1)\be} + 1),  \ldots, \pi_\be^{-1}(k_{1\be} + \ldots + k_{s\be})$ and at the same time it is a union of ending intervals with labels \linebreak $\pi_\en^{-1}(k_{1\en} + \ldots + k_{(s-1)\en} + 1), \ldots, \pi_\en^{-1}(k_{1\en} + \ldots + k_{s\en}),$ as a single {\em cycle}
\begin{gather*}
\Big({\pi_\be^{-1}(k_{1\be} + \ldots + k_{(s-1)\be} + 1)}\be, \ldots, {\pi_\be^{-1}(k_{1\be} + \ldots + k_{s\be})}\be,
\\
{\pi_\en^{-1}(k_{1\en} + \ldots + k_{s\en})}\en, \ldots, {\pi_\en^{-1}(k_{1\en} + \ldots + k_{(s-1)\en} + 1)}\en\Big)
\end{gather*}
in the \emph{doubled alphabet} $\bar\A = \A\times\{\be, \en\}.$ (In the sequel we denote elements of $\bar\A$ as $\bar\xi = \alpha\mathrm{m}  = (\alpha, \mathrm{m}),$ where $\alpha\in\A,$ $\mathrm{m}\in\{\be, \en\}.$) We call these finite sequences cycles, meaning cycles in a permutation of the set $\bar\A.$ Effectively, this is a one-to-one relation of following, so that every such a sequence is closed into a cycle, i.e., its first element follows the last one.

It is clear, that in our construction for an IET each element of the doubled alphabet $\bar\A$ belongs exactly to one cycle, and therefore the scheme (i.e., discrete data) of an IET can be defined as a permutation of $\bar\A$ such that each one of its cycles can be split into two arcs, the first arc containing only elements from $\A\times\{\be\},$ the second arc---only from $\A\times\{\en\}.$

Now we eliminate the latter restriction and consider any permutation $\sigma$ of the set $\bar\A$ as a \emph{IRE scheme}, where ``IRE'' stands for interval rearrangement ensemble, the new mathematical object that we introduce.

Formally, an IRE scheme $\sigma$ can be equivalently viewed either a bijective mapping of $\bar\A$ onto itself, or a $(2d\times2d)$ matrix $\bar\A^2\to\{0, 1\},$ in which every row and every column contains exactly one unity, or a corresponding oriented graph with $2d$ vertices $\bar\A,$ which is a union of mutually disjoint cycles. When $\sigma(\bar\xi) = \bar\eta,$ we say that $\bar\xi$ stands just (or right) before $\bar\eta,$ and that $\bar\eta$ stands right (or just) after $\bar\xi,$ $\bar\xi, \bar\eta\in\bar\A.$

Interval rearrangement ensemble is a pair $(\sigma, \xect),$ in which $\sigma\colon \bar\A\to\bar\A$ is a discrete bijective mapping (permutation{),} and $\xect\in\R^{\bar\A}$ is a vector, whose coordinates satisfy the equalities
\begin{equation}
\label{eq:endpoints_relation}
x_{\alpha\be} + x_{\alpha\en}-x_{\sigma(\alpha\be)} + x_{\sigma(\alpha\en)} = 0
\quad
{\text{for all}}
\quad
\alpha\in\A.
\end{equation}

It is clear that for every given permutation $\sigma$ the set of all \emph{vectors of endpoints} $\xect,$ which satisfy the relations (\ref{eq:endpoints_relation}) (we call such vectors \emph{allowed} by the scheme $\sigma$), forms a linear space $X_\sigma\subset\R^{\bar\A}.$

The \emph{vector of lengths} $\vect\in\R^{\A}$ for a given IRE $(\sigma, \xect)$ is defined coordinate-wise as
\begin{equation}
\label{eq:lengths_relation}
v_\alpha = x_{\sigma(\alpha\be)}-x_{\alpha\be} = x_{\alpha\en}-x_{\sigma(\alpha\en)},
\quad
\alpha\in\A,
\end{equation}
accordingly to (\ref{eq:endpoints_relation}). The set of all vectors $\vect,$ \emph{allowed} by the scheme $\sigma$ (i.e., obtained by substituting coordinates of an allowed vector of endpoints $\xect$ into the relations (\ref{eq:lengths_relation}), forms a linear apace $V_\sigma\subset\R^{\A}.$

We call a pair $(\sigma, \vect),$ with a vector of lengths $\vect\in V_\sigma,$ a \emph{floating} IRE, meaning by this that different segments (which correspond to cycles in the scheme $\sigma$) can be freely shifted along the coordinate axis independently on one another, unlike for a \emph{pinned down} IRE $(\sigma, \xect),$ where each segment (cycle) occupies some fixed position on the coordinate axis.

\begin{remark}
The relations (\ref{eq:lengths_relation}) can be rewritten in the form $x_{\sigma(\bar\xi)} = x_{\bar\xi}\pm v_\alpha$ for every $\bar\xi = \alpha\mathrm{m}\in\bar\A,$ where the plus sign corresponds to the case of $\mathrm{m} = \be,$ and the minus sign---to the case of $\mathrm{m} = \en.$ This form of relations allows to obtain the vector $\xect$ from a known vector $\vect$ and just a single endpoint value $x_{\bar\xi}$ for each cycle in the permutation $\sigma;$ the rest of the endpoint values in that cycle are calculated by the formulas
\begin{equation}
\label{eq:lengths-to-endpoints}
x_{\sigma^i(\bar\xi)} = x_{\bar\xi}\pm v_{\alpha_0}\pm\ldots\pm v_{\alpha_{i-1}},
\end{equation}
where $\alpha_j\in\A$ is the first component of $\sigma^j(\bar\xi)\in\bar\A,$ and the sign in front of $v_{\alpha_j}$ is determined by the second component of $\sigma^j(\bar\xi),$ $0\le j<i.$
\end{remark}

We call an IRE \emph{positive}, if $v_\alpha>0$ for all $\alpha\in\A.$ An IRE scheme is called \emph{positive}, if it allows a positive IRE.

A positive IRE should be visualized as an ensemble of $2d$ intervals, which are coupled in $d$ pairs with labels $\alpha  \in\A.$ In each such pair, the beginning interval $I_{\alpha\be} = [x_{\alpha\be}, x_{\sigma(\alpha\be)})$ and the ending interval $I_{\alpha\en} = [x_{\sigma(\alpha\en)}, x_{\alpha\en})$ with the same label $\alpha$ have the same length $v_\alpha.$ All these intervals are connected by their endpoints accordingly to the cycles in the scheme thus forming one or several closed one-dimensional polygonal chains (which we call by the same word ``cycles''). In the case of IET these polygonal chains cover the segments $J_s,$ $1\le s\le N,$ passing them twice: along the beginning and along the ending intervals.

\section{IREs as dynamical systems}
\label{sect:dynsys}
In this section we show, which dynamical systems are associated with IREs.

For an IRE scheme $\sigma$, an instance of $\sigma(\alpha\be) = \beta\en,$ $\alpha, \beta\in\A,$ is called a \emph{turn back} at $\beta\en,$ and an instance of $\sigma(\beta\en) = \alpha\be,$ $\alpha, \beta\in\A,$ is called a \emph{turn forward} at $\alpha\be.$ It is obvious that every cycle in the permutation $\sigma$ contains an equal number ob turns back and turns forward. We call a cycle \emph{twisted}, if it contains more than one turn back. Define the \emph{twist nummber} of a cycle as the number of turns back in it minus one. The twist number $T = T(\sigma)$ of an IRE scheme $\sigma$ is the sum of the twist numbers of all its cycles; in other words, it is the total number of turns back in this scheme minus the number $N = N(\sigma)$ of its cycles.

\begin{remark}
In a degenerate case, when a cycle consists either of beginning intervals only, or of ending intervals only, its twist number is~$-1$. Any IRE scheme containing such a cycle is not positive.
\end{remark}

If no cycle in a scheme of a positive IRE is twisted, then this IRE is a IET (it is a classical IET, if its scheme consists of a sungle cycle, and the onle turn forward happens at a symbol $\alpha\be$ such that $x_{\alpha\be} = 0$). An IRE of such type determines a dynamical system described in Sec.~\ref{sect:multi}: the cycles correspond to segments, which are split, on the one hand, into beginning intervals, and on the other hand, into ending intervals, and the bijection $f$ maps the former ones onto the latter ones.

However, if a positive IRE is not a IET, i.e., its scheme contains at least one twisted cycle, then some of the beginning intervals in that cycle necessarily overlap, as well as do some of the ending intervals in this cycle (every point of the real axis is covered with the same number of beginning and ending intervals belonging to the same cycle). This configuration does not determine a dynamical system uniquely. The most natural approach here is to associate an IRE with a family of dynamical systems, which are effectively
\emph{IETs on trees}. In order to obtain a tree from every particular cycle of a positive IRE, one has to fix a set of special points numbered by the twist number of this cycle. The overlapping intervals will be joined at these points and disjoint beyond them. The resulting phase space will be a tree with disjoint components corresponding to the cycles in the IRE (some authors use to call a tree with disjoint components a ``forest'', but we abstain). The tree obtained will be the union of all beginning intervals, as well as the union of all ending intervals, branching at the special points. The dynamical system on this tree will be determined by an ``almost bijective'' discontinuous map that shifts every beginning interval onto the corresponding ending interval, but the branching points have several (two, in a generic case) images and preimages.

\subsection{Transforming an IRE into an IET on a tree.}
\label{subsect:algorithm}

Here we present one of possible algorithms that transforms a positive IRE into a dynamical system, described informally in the previous paragraph as an IET on a tree. For an untwisted cycle, its beginning and ending segments can be seen as glued first to second, because each point of the corresponding segment is covered by exactly one beginning and one ending interval. If the cycle is twisted, then there are whole subsegments covered by more than one beginning interval (and by the same number of ending intervals), and we have to determine artificially how exactly the beginning and ending intervals will be glued first to second at each point. To do this, we are going to use the following inductive procedure, which must be applied to each twisted cycle separately.

Start by considering a closed polygonal chain $(x_{\alpha_1\be}, x_{\beta_1\en}, x_{\alpha_2\be}, x_{\beta_2\en}, \ldots, x_{\alpha_{t + 1}\be}, x_{\beta_{t + 1}\en}, x_{\alpha_1\be})$ with vertices at consecutive turns in our cycle (turn forward at $\alpha_1\be,$ then turn back at $\beta_1\en,$ then forward at $\alpha_2\be,$ back at $\beta_2\en$ etc.). Here $t$ is the twist number of the cycle. The segments of this polygonal chain consist alternately of beginning and ending points (i.e., the points belonging to beginning and ending intervals). We find the shortest one of these $2(t + 1)$ segments. Assume for definiteness that the shortest segment is a segment of beginning points $[x_{\alpha_i\be}, x_{\beta_i\en}]$ (for a segment of ending points $[x_{\beta_i\en}, x_{\alpha_{i + 1}\be}]$ the procedure is similar). Choose an arbitrary point $c_1$ on this shortest segment and declare that each point belonging to one of the beginning intervals with labels between $\alpha_i\be$ and $\beta_i\en$ in our cycle ($\beta_i\en$ is not included, and the rest are beginning since there are no other turns between the vertices of this polygonal line in the cycle) and lies to the left of $c_1,$ is glued to the corresponding point with the same coordinate, which belongs to one of the ending intervals with labels between $\beta_{i-1}\en$ and $\alpha_i\be$ in this cycle. On the other hand, each point belonging to one of the beginning intervals with labels between $\alpha_i\be$ and $\beta_i\en$ in our cycle and lies to the right of $c_1,$ is glued to the corresponding point with the same coordinate, which belongs to one of the ending intervals with labels between $\beta_{i}\en$ and $\alpha_{i + 1}\be.$ The beginning point with coordinate $c_1$ itself is glued to both corresponding ending points; it will be a branching point of the tree. (Here $\beta_{0}\en = \beta_{t + 1}\en,$ $\alpha_{t + 2}\be = \alpha_{1}\be,$ since the whole construction is cyclic.) Now, having determined the destination of beginning and ending points belonging to this part of our poligonal line, we exclude them from our consideration and return $c_1$ as an ending point (glued of two), reducing our task to a shorter poligonal chain $(x_{\alpha_1\be}, x_{\beta_1\en}, \ldots, x_{\alpha_{i-1}\be}, x_{\beta_{i-1}\en}, x_{\alpha_{i + 1}\be}, x_{\beta_{i + 1}\en}, \ldots, x_{\alpha_{t + 1}\be}, x_{\beta_{t + 1}\en}, x_{\alpha_1\be}),$ in which the segments of beginning and ending points alternate as before, but now it consists of $2t$ segments only. We again find the shortest of its segments, choose a point $c_2$ on it and do a similar gluing and obtain a polygonal chain of $2(t-1)$ segments. Continue with this procedure until a polygonal chain of just two segments $(x_{\alpha_m\be}, x_{\beta_n\en}, x_{\alpha_m\be})$ is left. Finally, in it we glue every beginning point to the ending point with the same coordinate, thus ending the process. The branching points $c_1, \ldots, c_t$ are determined, and the cycle became a tree.

After applying the procedure described above to each of the cycles in $\sigma,$ we obtain a dynamical system (with a finite number of trajectories that branch), which is effectively an interval exchange on a tree. The total number of branching points equals to the twist number $T(\sigma).$ An important feature of this algorithm is that every beginning interval is split into a final number of subintervals, and each one of them is glued as a whole to some ending interval, and vice versa. A detailed example of such gluing can be found in Sec.\,\ref{sect:surfaces}.

\begin{remark}
\label{remark:question}
A smart question would be: why studying an IET on a tree, when it can be transformed into an IET on multiple segments by cutting all the brunches at branching points and making separate segments of them?
The answer will become more clear after we describe the induction, the duality, the natural extension and corresponding flows for an IRE in sections that follow. Putting it shortly, an IRE has to be considered as aone-dimensional Poincare section for a certain two-dimensional continuous time system, and cutting branches on such a section cannot simplify the extended picture: is a surface contains special points, they cannot be just removed, but can be only shifted from (or to) a specific section.\
\end{remark}

We will explain this in more detail in subsequent sections; now let us investigate the spaces
$X_\sigma$ and $V_\sigma.$

\section{Dimensions of real data spaces}
\label{sect:dimensions}
An IRE scheme $\sigma$ with an alphabet $\A$ is called \emph{reducible}, if there exists a subset $\A_0\subset\A,$ $\A_0\not\in\{\varnothing, \A\}$ such that $\sigma(\bar\A_0) = \bar\A_0$ \big(here $\bar\A_0 = \A_0\times\{\be, \en\}$\big). The restriction of $\sigma$ to $\bar\A_0$ is an IRE scheme on its own, as is the restriction of $\sigma$ to $\bar\A\,\backslash\bar\A_0,$ so any IRE with this scheme $\sigma$ can be reduced to two independent IREs with smaller alphabets $\A_0$ and $\A\backslash\A_0.$ If such subset does not exist, then we call a scheme \emph{irreducible}. It is clear that for a given scheme $\sigma$ its alphabet $\A$ uniquely (up to an order) splits into subalphabets $\A_i,$ $1\le i\le P$ such that $\bigcup_{i = 1}^P\A_i = \A,$ and each restriction $\sigma_i = \sigma|_{\bar\A_i},$ $1\le i\le P,$ is irreducible. It is natural to call $\{\sigma_i\}_{1\le i\le P}$ the \emph{partition of $\sigma$ into irreducible components}. With a little abuse of terminology, we will refer to the subalphabets $\A_i,$ $1\le i\le P,$ themselves as to irreducible components (of the alphabet). It is obvious that the number $P = P(\sigma)$ of irreducible components of the scheme $\sigma$ falls in the range $1\le P\le d.$

\begin{statement}
\label{prop:X_dimension}
$\dim(X_\sigma) = d + P.$
\end{statement}

\begin{proof}
Write down the system of linear equations (\ref{eq:endpoints_relation}) in its matrix form
\begin{gather*}
\Delta\xect = 0
\end{gather*}
and notice some properties of thus defined $d\times2d$ matrix $\Delta$, the dimension of whose kernel we need to calculate. It is easy to see that every its column either contains one element~$1,$ one element~$-1,$ and all the rest are~$0,$ or consists entirely of elements~$0.$ Every its row $\Delta_\alpha,$ $\alpha\in\A,$ either contains two elements~$1,$ two elements~$-1,$ and all the rest are~$0,$ or one element~$1,$ one element~$-1,$ and all the rest are~$0,$ or else consists entirely of elements~$0.$ The sum of its rows $\sum_{\alpha\in\A_0}\Delta_\alpha$ taken over any subset $\A_0\subset\A$ contains equal number of elements~1 and~$-1,$ and the rest of its elements are~0; $\sum_{\alpha\in\A_0}\Delta_\alpha = 0$ if and only if $\sigma(\bar\A_0) = \bar\A_0,$ i.e., when $\A_0$ is a union of irreducible components. This property, in particular, implies that the cokernel of the matrix $\Delta$ contains vectors $\cect^{(i)}\in\R^\A$ with components $c^{(i)}_\alpha = 1$ for $\alpha\in\A_i$ and~$0$ for the rest, where $\A_i,$ $1\le i\le P,$ are the irreducible components. We need to prove that these $P$ linearly independent vectors span the whole cokernel of the matrix $\Delta.$

Let us assume that $\sum_{\alpha\in\A}c_\alpha\Delta_\alpha = 0$ with certain coefficients $\cect\in\R^\A$ and $c_{\alpha^{(1)}} = c\ne0$ for some $\alpha^{(1)}\in\A.$ If $\Delta_{\alpha^{(1)}} = 0,$ then $\{\alpha^{(1)}\}$ is an irreducible component. Otherwise, $\Delta_{\alpha^{(1)}}$ contains an element~$1,$ and there exists unique $\alpha^{(2)}\in\A$ such that the row $\Delta_{\alpha^{(2)}}$ contains an element~$-1$ in the same position. Therefore, $c_{\alpha^{(2)}} = c.$ If the sum $\Delta_{\alpha^{(1)}} + \Delta_{\alpha^{(2)}} = 0,$ then $\{\alpha^{(1)}, \alpha^{(2)}\}$ is a union of irreducible components. Otherwise, $\Delta_{\alpha^{(1)}} + \Delta_{\alpha^{(2)}}$ contains an element~$1,$ and there exists unique $\alpha^{(3)}\in\A$ such that $\Delta_{\alpha^{(3)}}$ contains an element~$-1$ in the same position. Therefore, $c_{\alpha^{(3)}} = c.$ We continue like this until the process stops. The stopping point is determining a union of irreducible components $\A_0$ such that $c_\alpha = c$ for all $\alpha\in\A_0.$ Since $\sum_{\alpha\in\A_0}\Delta_\alpha = 0,$ we remove these addends from the sum we started with and obtain the sum $\sum_{\alpha\in\A\backslash\A_0}c_\alpha\Delta_\alpha = 0$ over a smaller set of symbols. If there is still a non-zero coefficient left in it, then by applying the same procedure we find and remove another union of irreducible components. Continuing like this, we will inevitably stop with the conclusion that the vector of coefficients $\cect$ is indeed a linear combination of the vectors $\cect^{(i)},$ $1\le i\le P.$

Thus we have explicitly described the cokernel of the matrix $\Delta$ and shown that its dimension equals~$P.$ The rank of this matrix is therefore $d-P$, and the dimension of its kernel is $d + P.$

Proposition~\ref{prop:X_dimension} is proved.
\end{proof}

We have denoted by $N = N(\sigma)$ the number of cycles in the permutation $\sigma.$ A bound $N\ge P$ takes place, since each irreducible component $\sigma_i,$ $1\le i\le P,$ is a permutation, and therefore contains at least one cycle.

\begin{statement}
\label{prop:V_dimension}
$\dim(V_\sigma) = d + P-N.$
\end{statement}

\begin{proof}
The linear space $V_\sigma$ is an image of the linear space $X_\sigma$ under the action of the linear operator determined by the equalities (\ref{eq:lengths_relation}). We will explicitly describe the kernel of this linear operator. The permutation $\sigma$ consists of $N$ cycles, and the doubled alphabet $\bar\A$ splits into corresponding sets $S_i,$ $1\le i\le N.$ For every $\bar\xi\in S_i$ we have $S_i = \bigcup_{j = 0}^\infty\sigma^j(\bar\xi).$ The relations~(\ref{eq:lengths_relation}) imply that $\vect = 0$ if and only if $x_{\sigma(\bar\xi)} = x_{\bar\xi}$ for all $\bar\xi\in\bar\A.$ Threfore, the kernel we are describing consists of all vectors $\sum_{i = 1}^Nc_i\xect^{(i)},$ where $x^{(i)}_{\bar\xi} = 1$ for $\bar\xi\in S_i,$ and the rest of the elements are~0, $c_i\in\R,$ $1\le i\le N.$ Since the vectors $\xect^{(i)},$ $1\le i\le N,$ belong to $X_\sigma,$ are linearly independent and span the whole kernel, its dimension equals~$N.$ Now it follows from Proposition~\ref{prop:X_dimension} that the dimension of $V_\sigma$ is $d + P-N.$

Proposition~\ref{prop:V_dimension} is proved.
\end{proof}

Note, that this proof implies the non-obvious inequality $N\le d + P.$

It also justifies the concept of floating IREs $(\sigma, \vect)$ as the result of factorization of the space of pinned down IREs $(\sigma, \xect)$ w.r.t.\ free shifting of their cycles along the coordinate axis (though the number of cycles is different for different schemes).

\section{Induction for IREs}
\label{sect:induction}
We define four elementary induction steps $\Pi_{\alpha\beta}^{\mathrm{rb}},$ $\Pi_{\alpha\beta}^{\mathrm{re}},$ $\Pi_{\alpha\beta}^{\mathrm{lb}},$ and $\Pi_{\alpha\beta}^{\mathrm{le}}$ in the framework of the IRE concept. They directly generalize the steps we defined in Sec.~\ref{sect:multi}. Therefore, we reserve the same notations for them, although now these operators will be acting on different data, both discrete and real. And not going into unnecessary formalities, we will denote and call these steps in the same way whether they act on IRE schemes $\sigma$ (they act on schemes independently on real data), on IRE proper, both pinned down $(\sigma, \xect)$ and floating $(\sigma, \vect),$ or even on real components of IREs separately (when a scheme is fixed by the context). In Sec.~\ref{sect:extension} we will define natural extensions of IREs, and the four steps will be acting on those as well.

The steps $\Pi_{\alpha\beta}^{\mathrm{rb}}$ and $\Pi_{\alpha\beta}^{\mathrm{re}}$ are applicable in the case of $\sigma(\alpha\be) = \beta\en$ (turn back at $\beta\en,$ see Sec.~\ref{sect:dynsys}), while the steps $\Pi_{\alpha\beta}^{\mathrm{lb}}$ and $\Pi_{\alpha\beta}^{\mathrm{le}}$ are applicable in the case of $\sigma(\beta\en) = \alpha\be$ (turn bak at $\alpha\be$).

In terms of cycles in the permutation $\sigma$, the elementary induction steps act in the following way: the step $\Pi_{\alpha\beta}^{\mathrm{rb}}$ \big(the steps $\Pi_{\alpha\beta}^{\mathrm{re}},$ $\Pi_{\alpha\beta}^{\mathrm{lb}},$ $\Pi_{\alpha\beta}^{\mathrm{le}}$\big) move the element $\beta\en$ (the elements $\alpha\be,$ $\beta\en,$ $\alpha\be$) from its current position into a position straight before $\alpha\en$ (straight after $\beta\be,$ straight after $\alpha\en,$ straight before $\beta\be$). It is possible that the starting and the finishing position of the moved elements is the same (for ex., when the step $\Pi_{\alpha\beta}^{\mathrm{rb}},$ is used, and $\beta\en$ is already standing straight before $\alpha\en$), in such a situation the scheme does not change. (By saying that $\bar\xi$ stands straight before $\bar\eta,$ and $\bar\eta$ stand straight after $\bar\xi,$ we mean that $\sigma(\bar\xi) = \bar\eta,$ $\bar\xi, \bar\eta\in\bar\A.$)

Formally, let $\sigma'$ be an IRE scheme obtained from the scheme $\sigma$ as a result of applying an induction step. The action of the four steps on schemes is written as follows:
\begin{description}
\item[$\Pi_{\alpha\beta}^{\mathrm{rb}}$:]
if $\sigma(\beta\en) = \alpha\en,$ then $\sigma' = \sigma;$
otherwise $\sigma'(\alpha\be) = \sigma(\beta\en),$ $\sigma'(\beta\en) = \alpha\en,$ $\sigma'(\sigma^{-1}(\alpha\en)) = \beta\en,$
$\sigma'(\bar\xi) = \sigma(\bar\xi)$ for all $\bar\xi\in\bar\A\,\backslash\{\alpha\be, \beta\en, \sigma^{-1}(\alpha\en)\},$

\item[$\Pi_{\alpha\beta}^{\mathrm{re}}$:]
if $\sigma(\beta\be) = \alpha\be,$ then $\sigma' = \sigma;$
otherwise $\sigma'(\sigma^{-1}(\alpha\be)) = \beta\en,$ $\sigma'(\beta\be) = \alpha\be,$ \linebreak$\sigma'(\alpha\be)  = \sigma(\beta\be),$
$\sigma'(\bar\xi) = \sigma(\bar\xi)$ for all $\bar\xi\in\bar\A\,\backslash\{\sigma^{-1}(\alpha\be), \beta\be, \alpha\be\},$

\item[$\Pi_{\alpha\beta}^{\mathrm{lb}}$:]
if $\sigma(\alpha\en) = \beta\en,$ then $\sigma' = \sigma;$
otherwise $\sigma'(\sigma^{-1}(\beta\en)) = \alpha\be,$ $\sigma'(\alpha\en) = \beta\en,$ $\sigma'(\beta\en) = \sigma(\alpha\en),$
$\sigma'(\bar\xi) = \sigma(\bar\xi)$ for all $\bar\xi\in\bar\A\,\backslash\{\sigma^{-1}(\beta\en), \alpha\en, \beta\en\},$

\item[$\Pi_{\alpha\beta}^{\mathrm{le}}$:]
if $\sigma(\alpha\be) = \beta\be,$ then $\sigma' = \sigma;$
otherwise $\sigma'(\beta\en) = \sigma(\alpha\be),$ $\sigma'(\alpha\be) = \beta\be,$\\ $\sigma'(\sigma^{-1}(\beta\be)) = \alpha\be,$ $\sigma'(\bar\xi) = \sigma(\bar\xi)$ for all $\bar\xi\in\bar\A\,\backslash\{\beta\en, \alpha\be, \sigma^{-1}(\beta\be)\}.$
\end{description}

\begin{statement}
\label{prop:steps_bijective_on_schemes}
The elementary induction step $\Pi_{\alpha\beta}^{\mathrm{rb}}$ (the steps $\Pi_{\alpha\beta}^{\mathrm{re}},$ $\Pi_{\alpha\beta}^{\mathrm{lb}},$ $\Pi_{\alpha\beta}^{\mathrm{le}}$) acts on IRE schemes as a bijection from the set of all $\sigma$ such that $\sigma(\alpha\be) = \beta\en$ (such that $\sigma(\alpha\be) = \beta\en,$ $\sigma(\beta\en) = \alpha\be,$ $\sigma(\beta\en) = \alpha\be${),} onto the set of all $\sigma'$ such that $\sigma'(\beta\en) = \alpha\en$ (such that $\sigma'(\beta\be) = \alpha\be,$ $\sigma'(\alpha\en) = \beta\en,$ $\sigma'(\alpha\be) = \beta\be${).}

Each of these steps preserves the number of cycles and the set of irreducible components of a scheme.
\end{statement}

\begin{proof}
We carry out the proof for $\Pi_{\alpha\beta}^{\mathrm{rb}}$; for three other steps it is similar.

So, let us analyze the first block of formulas presented above. Assuming that $\sigma(\alpha\be)  = \beta\en,$ it is easy to see that in both cases they imply $\sigma'(\beta\en) = \alpha\en.$ It is obvious that the two sets of permutations $\{\sigma\colon \sigma(\alpha\be) = \beta\en\}$ and $\{\sigma'\colon \sigma'(\beta\en) = \alpha\en\}$ contain the same number of elements (namly, $(2d-1)!$ elements), hence it is enough to prove that $\Pi_{\alpha\beta}^{\mathrm{rb}}$ maps the first set \emph{onto} the second, i.e., that it is a surjection. Take arbitrary $\sigma'$ such that $\sigma'(\beta\en) = \alpha\en,$ and determine corresponding $\sigma$ in the following way:
\begin{gather}
\label{eq:Pi^rb_alpha_beta^-1_on_schemes}
\text{if}
\quad
\sigma'(\alpha\be) = \beta\en,
\quad
\text{then}
\quad
\sigma = \sigma';
\notag \\
\text{otherwise}
\quad
\sigma(\beta\en) = \sigma'(\alpha\be),
\quad
\sigma(\alpha\be) = \beta\en,
\quad
\sigma((\sigma')^{-1}(\beta\en)) = \alpha\en,
\\
\sigma(\bar\xi) = \sigma'(\bar\xi)
\quad \text{for all}\quad 
\bar\xi\in\bar\A\,\backslash\{\beta\en, \alpha\be, (\sigma')^{-1}(\beta\en)\}.
\notag 
\end{gather}
It is not hard to check that $\sigma(\alpha\be) = \beta\en$ in both cases, and $\Pi_{\alpha\beta}^{\mathrm{rb}}(\sigma) = \sigma',$ therefore, $\sigma'$ indeed has a preimage. (Let us remark that we have just written the explicit formulas for the inverse operator $(\Pi_{\alpha\beta}^{\mathrm{rb}})^{-1}$ acting on schemes, which we will make use of in the next section.)

Concerning the cycles in $\sigma$ and $\sigma'$: a single element $\beta\en$ is first removed from the cycle that contains $\alpha\be,$ and then inserted into the cycle that contatins $\alpha\en.$ These two cycles may coincide, however none of them is or becomes empty, therefore the total number of cycles does not change.

Finally, concerning the irreducible components: those two cycles we mentioned above (which may coincide) belong to the same irreducible component (namely, to the one containing the symbol $\alpha$), and therefore moving the element $\beta\en$ does not change any of the irreducible components.

Proposition~\ref{prop:steps_bijective_on_schemes} is proved.
\end{proof}

Next, we describe the action of the elementary steps on real data. Putting it in words, under the action of $\Pi_{\alpha\beta}^{\mathrm{rb}}$ $(\Pi_{\alpha\beta}^{\mathrm{re}},$ $\Pi_{\alpha\beta}^{\mathrm{lb}},$ $\Pi_{\alpha\beta}^{\mathrm{le}}),$ the two intervals with the label $\alpha$ $(\beta,$ $\alpha,$ $\beta)$ are being cropped on the right (on the right, on the left, on the left) by the length of the two intervals with the label $\beta$ $(\alpha,$ $\beta,$ $\alpha)$, and the interval $I_{\beta\en}$ $(I_{\alpha\be},$ $I_{\beta\en},$ $I_{\alpha\be})$ is moved into the position straight to the right (to the right, to the left, to the left) of the newly cropped interval $I_{\alpha\en}$ $(I_{\beta\be},$ $I_{\alpha\en},$ $I_{\beta\be})$.

Formaly, let $(\sigma', \xect')$ stands for an IRE, obtained from the IRE $(\sigma, \xect)$ as a result of applying an induction step, and the vector $\vect$ is calculated by formulas (\ref{eq:lengths_relation}). Our four induction steps act on vectors of endpoints in the following way:
\begin{description}
\item[$\Pi_{\alpha\beta}^{\mathrm{rb}}:$]
$x'_{\beta\en} = x_{\alpha\en},$ $x'_{\alpha\en} = x_{\alpha\en}-v_\beta,$ $x'_{\bar\xi} = x_{\bar\xi}$\, for all\, $\bar\xi\in\bar\A\,\backslash\{\beta\en, \alpha\en\},$

\item[$\Pi_{\alpha\beta}^{\mathrm{re}}$:]
$x'_{\beta\en} = x_{\alpha\be},$ $x'_{\alpha\be} = x_{\sigma(\beta\be)}-v_\alpha,$ $x'_{\bar\xi} = x_{\bar\xi}$\, for all\, $\bar\xi\in\bar\A\,\backslash\{\beta\en, \alpha\be\},$

\item[$\Pi_{\alpha\beta}^{\mathrm{lb}}$:]
$x'_{\alpha\be} = x_{\beta\en},$ $x'_{\beta\en} = x_{\sigma(\alpha\en)} + v_\beta,$ $x'_{\bar\xi} = x_{\bar\xi}$\, for all\, $\bar\xi\in\bar\A\,\backslash\{\alpha\be, \beta\en\},$

\item[$\Pi_{\alpha\beta}^{\mathrm{le}}$:]
$x'_{\alpha\be} = x_{\beta\be},$ $x'_{\beta\be} = x_{\beta\be} + v_\alpha,$ $x'_{\bar\xi} = x_{\bar\xi}$\, for all\, $\bar\xi\in\bar\A\,\backslash\{\alpha\be, \beta\be\}.$
\end{description}

Let $\vect'$ be the vector of lengths for the IRE $(\sigma', \xect').$ The elementary steps act on vectors of lengths as follows:
\begin{description}
\item[$\Pi_{\alpha\beta}^{\mathrm{rb}}$ or $\Pi_{\alpha\beta}^{\mathrm{lb}}$:]
$v'_\alpha = v_\alpha-v_\beta,$ $v'_\xi = v_\xi$\, for all\, $\xi\in\A\backslash\{\alpha\},$

\item[$\Pi_{\alpha\beta}^{\mathrm{re}}$ or $\Pi_{\alpha\beta}^{\mathrm{le}}$:]
$v'_\beta = v_\beta-v_\alpha,$ $v'_\xi = v_\xi$\, for all\, $\xi\in\A\backslash\{\beta\}.$
\end{description}

\begin{statement}
\label{prop:steps_bijective_on_real_data}
If an elementary induction step transforms a scheme $\sigma$ into a scheme $\sigma',$ then it acts on real components of IREs as a linear bijection between the spaces of allowed endpoints $X_\sigma$ and $X_{\sigma'}$, and as a linear bijection between the spaces of allowed lengths $V_\sigma$ and $V_{\sigma'}.$
\end{statement}

\begin{proof}
Let $\sigma$ be a fixed element from the set of definition of a fixed induction step (i.e., $\sigma(\alpha\be) = \beta\en,$ and the step is $\Pi_{\alpha\beta}^{\mathrm{rb}}$ or $\Pi_{\alpha\beta}^{\mathrm{re}};$ or $\sigma(\beta\en) = \alpha\be,$ and the step is $\Pi_{\alpha\beta}^{\mathrm{lb}}$ or $\Pi_{\alpha\beta}^{\mathrm{le}},$ with certain $\alpha, \beta\in\A$).

The formulas presented above for $\xect'$ and $\vect'$ are linear indeed. The second part of Proposition~\ref{prop:steps_bijective_on_schemes}, in view of Propositions~\ref{prop:X_dimension} and~\ref{prop:V_dimension}, implies that the spaces of allowed endpoints $X_\sigma$ and $X_{\sigma'}$ have the same dimension; the same claim is true for the spaces of allowed lengths $V_\sigma$ and $V_{\sigma'}.$ Hence, it is enough to prove that the induction step maps $X_\sigma$ onto $X_{\sigma'}$ and $V_\sigma$ onto $V_{\sigma'}$ surjectively.

In the case of the step $\Pi_{\alpha\beta}^{\mathrm{rb}}$, for given $\xect'\in X_{\sigma'}$ (and corresponding $\vect'\in V_{\sigma'},$ given by the equalities $v'_\xi = x'_{\sigma'(\xi\be)}-x'_{\xi\be} = x'_{\xi\en}-x'_{\sigma'(\xi\en)},$ $\xi\in\A$) we determine $\xect$ and $\vect$ as follows:
\begin{gather*}
x_{\alpha\en} = x'_{\beta\en},
\quad
x_{\beta\en} = x'_{\sigma'(\alpha\be)} + v'_\beta,
\qquad
x_{\bar\xi} = x'_{\bar\xi}
\quad\text{for all}\quad
\bar\xi\in\bar\A\,\backslash\{\alpha\en, \beta\en\},
\\
v_\alpha = v'_\alpha + v'_\beta,
\qquad
v_\xi = v'_\xi
\quad\text{for all}\quad
\xi\in\A\backslash\{\alpha\}.
\end{gather*}

It is not hard to check that $\xect\in X_\sigma,$ $\vect\in V_\sigma$ relates to $\xect$ in accordance with (\ref{eq:lengths_relation}), and $\Pi_{\alpha\beta}^{\mathrm{rb}}$ indeed transforms $\xect$ into $\xect'$ and $\vect$ into $\vect'.$

For three other elementary induction steps the inverse formulas on real data can be derived and confirmed similarly.

Proposition~\ref{prop:steps_bijective_on_real_data} is proved.
\end{proof}

\begin{remark}
The transformation of lengths $V_\sigma\to V_{\sigma'}$ happens independently on the transformation of endpoints $X_\sigma\to X_{\sigma'},$ therefore it is equally possible to study the induction on floating IREs $(\sigma, \vect)$ or on pinned down IREs $(\sigma, \xect).$ A researcher may choose an approach that is the most appropriate for particular aims.
\end{remark}

\begin{remark}
If it is necessary to guarantee positiveness of IREs during an induction process, then the steps $\Pi_{\alpha\beta}^{\mathrm{rb}}$ and $\Pi_{\alpha\beta}^{\mathrm{lb}}$ can be applied only if $v_\alpha>v_\beta,$ while the steps $\Pi_{\alpha\beta}^{\mathrm{re}}$ and $\Pi_{\alpha\beta}^{\mathrm{le}}$ can be applied only if $v_\alpha<v_\beta.$ It is easy to see that if an $(\sigma, \vect)$ is positive and generic (in the sense that the lengths $v_\xi,$ $\xi\in\A,$ are pairwise different), then for every turn back or forward in a cycle of the permutation $\sigma$ with neighboring elements $\alpha\be$ and $\beta\en,$ $\alpha\ne\beta,$ exactly one of four elementary steps $\Pi_{\alpha\beta}^{\cdot\cdot}$ can be applied with preservation of positiveness.
\end{remark}

\section{Duality and time reversing symmetry}
\label{sect:duality}
Here we define the duality between IRE schemes, which is the cornerstone of our construction, because it allows to invert an induction process in time.

Two IRE schemes $\sigma$ and $\sigma^*$ are called {\em dual} to each other (or {\em mutually dual}), if
\begin{equation}
\label{eq:duality}
\sigma^*(\alpha\be) = \sigma(\alpha\en),
\quad
\sigma^*(\alpha\en) = \sigma(\alpha\be)
\quad
\text{for all}
\qquad
\alpha\in\A.
\end{equation}
The relations (\ref{eq:duality}) determine the {\em duality involution} $\Inv$ in the space of all schemes as $\Inv\sigma = \sigma^*.$ It can be written in the form $\Inv\sigma = \sigma\circ\iota,$ where $\iota$ is a simple reflection of the doubled alphabet $\bar\A$ that swaps $\alpha\be$ with $\alpha\en$ for every $\alpha\in\A.$

It follows straight from the definition that mutually dual schemes have the same irreducible components, however the number of cycles in $\sigma$ and $\sigma^*$ is, in general, different.

Our main result describes the time reversing symmetry for induction on schemes.

\begin{theorem}
\label{theorem:1}
Every one of the following equalities takes place on the set of all those schemes, for which its left-hand side is defined (see Proposition~{\ref{prop:steps_bijective_on_schemes}):}
\begin{gather*}
(\Pi_{\alpha\beta}^{\mathrm{rb}})^{-1} = \Inv\circ\Pi_{\beta\alpha}^{\mathrm{rb}}\circ\Inv,
\\
(\Pi_{\alpha\beta}^{\mathrm{re}})^{-1} = \Inv\circ\Pi_{\alpha\beta}^{\mathrm{lb}}\circ\Inv,
\\
(\Pi_{\alpha\beta}^{\mathrm{lb}})^{-1} = \Inv\circ\Pi_{\alpha\beta}^{\mathrm{re}}\circ\Inv,
\\
(\Pi_{\alpha\beta}^{\mathrm{le}})^{-1} = \Inv\circ\Pi_{\beta\alpha}^{\mathrm{le}}\circ\Inv.
\end{gather*}
\end{theorem}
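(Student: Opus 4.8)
The plan is to verify each of the four identities by a direct computation with the explicit scheme formulas, using the explicit inverse formula for $(\Pi_{\alpha\beta}^{\mathrm{rb}})^{-1}$ already recorded in the proof of Proposition~\ref{prop:steps_bijective_on_schemes}, together with the three analogous inverse formulas for $\Pi_{\alpha\beta}^{\mathrm{re}}$, $\Pi_{\alpha\beta}^{\mathrm{lb}}$, $\Pi_{\alpha\beta}^{\mathrm{le}}$, which one obtains by the same surjectivity argument. The computation rests on two elementary observations. First, since $\Inv\sigma=\sigma\circ\iota$ with $\iota^2=\Id$, the map $\Inv$ is an involution and $(\tau\circ\iota)^{-1}=\iota\circ\tau^{-1}$ for every scheme $\tau$. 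Second, the domains match: $\Inv\circ\Pi_{\beta\alpha}^{\mathrm{rb}}\circ\Inv$ is applicable to $\sigma'$ exactly when $\Pi_{\beta\alpha}^{\mathrm{rb}}$ is applicable to $\sigma'\circ\iota$, i.e.\ when $(\sigma'\circ\iota)(\beta\be)=\sigma'(\beta\en)=\alpha\en$, which is precisely the domain of $(\Pi_{\alpha\beta}^{\mathrm{rb}})^{-1}$ by Proposition~\ref{prop:steps_bijective_on_schemes}; the domain checks for the other three conjugations are identical.

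Before computing, I would cut the workload in half. Taking inverses of both sides of the second identity and then conjugating by the involution $\Inv$ turns it into the third identity (and conversely), so these two are equivalent and only one needs to be proved directly. The first and fourth identities are self-paired under the label transposition $\alpha\leftrightarrow\beta$ (the same manipulation shows only that the statement for $(\alpha,\beta)$ and for $(\beta,\alpha)$ are equivalent, which yields no reduction), so each of them must be verified on its own. Thus it suffices to carry out the core computation for identities~1, 2, and~4.

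The core step is always the same. Given $\sigma'$ in the relevant domain, set $\tau=\Inv\sigma'=\sigma'\circ\iota$, apply the induction-step formula to get $\rho=\Pi^{\cdots}(\tau)$, and set $\sigma=\Inv\rho=\rho\circ\iota$; then compare $\sigma$ term by term with the explicit inverse formula. For identity~1, in the generic branch $\sigma'(\alpha\be)\ne\beta\en$ one finds $\sigma(\beta\en)=\rho(\beta\be)=\tau(\alpha\en)=\sigma'(\alpha\be)$, then $\sigma(\alpha\be)=\rho(\alpha\en)=\beta\en$, and $\sigma\big((\sigma')^{-1}(\beta\en)\big)=\rho\big(\tau^{-1}(\beta\en)\big)=\alpha\en$, with all remaining values equal to $\sigma'$. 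Using $\iota\big(\tau^{-1}(\beta\en)\big)=(\sigma')^{-1}(\beta\en)$, the set of modified positions and their images reproduce the inverse formula of Proposition~\ref{prop:steps_bijective_on_schemes} exactly; moreover the trivial branch condition $\tau(\alpha\en)=\beta\en$ translates into $\sigma'(\alpha\be)=\beta\en$, so the two ``$\sigma=\sigma'$'' cases coincide. Identities~2 and~4 are handled by the verbatim procedure, the only change being the three modified positions dictated by the respective step formulas.

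The main obstacle is purely combinatorial bookkeeping: after the two applications of $\Inv$, one must confirm that the exact three-element set at which the scheme is altered, and the images assigned there, match those of the target inverse formula, which requires care in pushing the ``rest unchanged'' clause through the conjugation by $\iota$. No idea beyond the given formulas is needed, since the theorem statement already supplies the correct pairing of steps (\emph{rb} with \emph{rb} under transposed labels, \emph{re} with \emph{lb}, and \emph{le} with \emph{le} under transposed labels)—the genuinely nontrivial ingredient, here to be verified rather than discovered. The one subtlety to treat explicitly is the correspondence of the trivial (scheme-unchanged) branches under duality, as this is what guarantees the two maps agree on the entire domain and not merely on its generic part.
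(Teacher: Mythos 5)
Your proposal is correct and follows essentially the same route as the paper's proof: conjugate by $\Inv$ using $\Inv\sigma=\sigma\circ\iota$, compare the result term by term with the explicit inverse formula~(\ref{eq:Pi^rb_alpha_beta^-1_on_schemes}) from Proposition~\ref{prop:steps_bijective_on_schemes}, and treat the trivial branch $\sigma'(\alpha\be)=\beta\en$ separately---your computation for the first identity reproduces the paper's argument almost verbatim. The only difference is organizational: you note that the second and third identities are formally equivalent under inversion and conjugation by $\Inv$, a small economy the paper does not exploit, since it proves the first identity and declares the remaining three similar.
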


\begin{proof}
We prove the first equality (the rest are proved similarly).

Accordingly to Proposition~\ref{prop:steps_bijective_on_schemes}, the left-hand side of this equality is defined on the set of schemes $\{\sigma'\colon \sigma'(\beta\en) = \alpha\en\}.$ Consider an arbitrary scheme $\sigma'$ from this set. First of all, $\Inv\sigma'(\beta\be) = \alpha\en,$ which makes $\Pi_{\beta\alpha}^{\mathrm{rb}}$ defined for $\Inv\sigma'.$ The scheme $\sigma = (\Pi_{\alpha\beta}^{\mathrm{rb}})^{-1}\sigma'$ is given by the formulas~(\ref{eq:Pi^rb_alpha_beta^-1_on_schemes}). We need to check that $\Inv\circ\Pi_{\beta\alpha}^{\mathrm{rb}}\circ\Inv\sigma'$ is the same $\sigma.$

If $\sigma'(\alpha\be) = \beta\en,$ then $\sigma = \sigma'.$ On the other hand, $\Inv\sigma'(\alpha\en) = \beta\en,$ therefore $\Pi_{\beta\alpha}^{\mathrm{rb}}\circ\Inv\sigma' = \Inv\sigma',$ and $\Inv\circ\Pi_{\beta\alpha}^{\mathrm{rb}}\circ\Inv\sigma' = \Inv\circ\Inv\sigma' = \sigma' = \sigma.$ The equality holds.

Now consider the case of $\sigma'(\alpha\be)\ne\beta\en.$ Denote $\tau = \Inv\sigma'$ and $\tau' = \Pi_{\beta\alpha}^{\mathrm{rb}}\tau.$ Since $\tau(\alpha\en)\ne\beta\en,$ then $\tau'(\beta\be) = \tau(\alpha\en),$ $\tau'(\alpha\en) = \beta\en,$ $\tau'(\tau^{-1}(\beta\en)) = \alpha\en,$ and $\tau'(\bar\xi) = \tau(\bar\xi)$ for all $\bar\xi\in\bar\A\,\backslash\{\beta\be, \alpha\en, \tau^{-1}(\beta\en)\}.$ Substitution of $\tau = \sigma'\circ\iota$ gives $\tau'(\beta\be) = \sigma'(\alpha\be)$ and $\tau'(\iota((\sigma')^{-1}(\beta\en))) = \alpha\en.$ Hence, we have $\Inv\tau'(\beta\en) = \sigma'(\alpha\be),$ $\Inv\tau'(\alpha\be) = \beta\en,$ $\Inv\tau'((\sigma')^{-1}(\beta\en)) = \alpha\en$, and $\Inv\tau'(\bar\xi) = \sigma'(\bar\xi)$ for all $\bar\xi\in\bar\A\,\backslash\{\beta\en, \alpha\be, (\sigma')^{-1}(\beta\en)\}.$ Thus we arrive at the equality $\Inv\tau' = \sigma.$

Theorem~\ref{theorem:1} is proved.
\end{proof}

At the beginning of Sec.~\ref{sect:dynsys} we defined turns back and forward and the twist number $T = T(\sigma)$ of an IRE scheme $\sigma.$

\begin{statement}
\label{prop:total_turns}
The total number of turns back (as well as turns forward) in two mutually dual IRE schemes is $d.$
\end{statement}

\begin{proof}
Consider an arbitrary IRE scheme $\sigma$ and its dual $\sigma^* = \Inv\sigma.$ For every $\alpha\in\A$, the two-element sets $\{\sigma(\alpha\be), \sigma(\alpha\en)\}$ and $\{\sigma^*(\alpha\be), \sigma^*(\alpha\en)\}$ coincide; denote them by $M_\alpha.$ A turn (back or forward) in a scheme $\sigma$ takes place when either $\sigma(\xi_1\be)  = \xi_2\en$, or $\sigma(\xi_1\en) = \xi_2\be$ for some $\xi_1, \xi_2\in\A,$ and similarly for $\sigma^*.$ The second components of the two elements of $M_\alpha$ can be either both b, or both e, or else one b and one e. It is easy to check that in all three cases the total number of turns at the elements of $M_\alpha$ (back and forward, for $\sigma,$ and for $\sigma^*$) alwais equals~2. Since there are $d$ sets $M_\alpha,$ $\alpha\in\A,$ in total, and $\bigcup_{\alpha\in\A}M_\alpha(\sigma) = \bar\A,$ then the total number of turns (back and forward, for $\sigma,$ and for $\sigma^*$ again) equals~$2d.$ And since the number of turns back equals to the number of turns forward, the statement of the proposition holds.
\end{proof}

We define the {\em twist total} of an IRE scheme $\sigma$ as the value $T(\sigma) + T(\sigma^*),$ i.e., the sum of the twist numbers of two mutually dual schemes $\sigma$ and $\sigma^* = \Inv\sigma.$ Proposition~\ref{prop:total_turns} implies the equality
\begin{equation}
\label{eq:T + T* + N + N* = d}
T(\sigma) + T(\sigma^*) + N(\sigma) + N(\sigma^*) = d,
\end{equation}
where $N(\sigma)$ and $N(\sigma^*)$ are the numbers of cycles in $\sigma$ and $\sigma^*$ respectively. We arrive at the following important observation.

\begin{statement}\label{prop:twist_total}
Elementary induction steps $\Pi_{\alpha\beta}^{\mathrm{rb}},$ $\Pi_{\alpha\beta}^{\mathrm{lb}},$ $\Pi_{\alpha\beta}^{\mathrm{re}},$ and $\Pi_{\alpha\beta}^{\mathrm{le}}$ in their action on IRE schemes preserve their twist total.
\end{statement}

\begin{proof}
Theorem~\ref{theorem:1} implies that if a scheme $\sigma$ is transformed into $\sigma'$ under the action of an induction step, then the scheme $(\sigma')^*$ is transformed into $\sigma^*$ also under the action of an induction step as well. Therefore $N(\sigma') = N(\sigma)$ and $N((\sigma')^*) = N(\sigma^*)$ by Proposition~\ref{prop:steps_bijective_on_schemes}. Now the equality (\ref{eq:T + T* + N + N* = d}) implies $T(\sigma) + T(\sigma^*) = T(\sigma') + T((\sigma')^*).$

Proposition~\ref{prop:twist_total} is proved.
\end{proof}

The last proposition means that whenever an induction step removes a turn from an IRE scheme, this turn does not just disappear, but rather gets transferred into the dual scheme. Let us show how this happens.

Consider the step $\Pi_{\alpha\beta}^{\mathrm{rb}}$ (recall that $\alpha\ne\beta$ by definition); three other cases are similar. It is easy to see that $\Pi_{\alpha\beta}^{\mathrm{rb}}$ removes a turn from the scheme $\sigma$ if and only if $\sigma(\alpha\be) = \beta\en,$ and $\sigma(\beta\en) = \gamma\be$ for some $\gamma\in\A.$ In this case, $\beta\en$ is moved from the position in-between $\alpha\be$ and $\gamma\be$ into the position straight after $\alpha\en,$ one turn back (and one turn forward as well) disappears, and the twist number decreases by~1. Formally, we have $\sigma'(\alpha\be) = \gamma\be$ and $\sigma'(\alpha\en) = \beta\en$ for $\sigma' = \Pi_{\alpha\beta}^{\mathrm{rb}}\sigma.$ In the dual domain, we have $\Inv\sigma(\beta\en) = \alpha\en$ and $\Inv\sigma(\beta\be) = \gamma\be;$
$\Inv\sigma'(\beta\be) = \alpha\en$ and $\Inv\sigma'(\alpha\en) = \gamma\be,$ therefore, as the result of applying the step $(\Pi_{\beta\alpha}^{\mathrm{rb}})^{-1}$ to $\Inv\sigma$, the element $\alpha\en$ is moved from its position straight after $\beta\en$ into the position in-between $\beta\be$ and $\gamma\be,$ one turn back (as well as one turn forward) is being added, and the twist number increases by~1. (One can check that this works for the degenerate cases $\gamma = \alpha$ or $\gamma = \beta$ too.)

\begin{remark}
The last proposition adds one more argument towards the necessity of considering IREs instead of IETs: generically, for an untwisted IET scheme, its dual is twisted and therefore is not an IET scheme, while the twist total characterizes a pair of mutually dual schemes $(\sigma, \Inv\sigma)$ as a whole and does not change due to induction. It is natural to generalize the classical notion of \emph{Rauzy classes} for IETs towards IRE schemes: if one IRE scheme is obtained from another as a result of applying an elementary induction step, then they belong to the same Rauzy class. Now every Rauzy class may contain both untwisted and twisted IRE schemes, however the twist total value is the same for the whole class. Induction can transfer turns between a scheme and its dual, while the twist total remains unchanged.
\end{remark}

\begin{statement}
\label{prop:parity}
The total number of cycles in two mutually dual schemes has the same parity as $d.$ The twist total is always an even number.
\end{statement}

\begin{proof}
It is known that every permutation can be expressed as a product of transpositions and is either even or odd according to the number of transpositions in such a product. Multiplying a permutation by a transposition changes its parity as well as the parity of the number of cycles in it. Every cycle of length $l\ge1$ is a product of $l-1$ transpositions, hence any permutation on $d$ elements is a product of $d-N$ transpositions, where $N\ge1$ is the number of cycles in it. Consider the following surgery procedure on IRE schemes.

Assume for a scheme $\sigma$ that $\sigma(\bar\xi_0)\ne\bar\xi_0$ for certain $\bar\xi_0\in\bar\A.$ Denote $\bar\xi_1 = \sigma^{-1}(\bar\xi_0),$ $\bar\xi_2 = \sigma(\bar\xi_0).$ Now pinch away the element $\bar\xi_0$ from the cycle it belonged to, i.e., transform $\sigma$ into $\sigma'$ such that $\sigma'(\bar\xi_1)  = \bar\xi_2,$ $\sigma'(\bar\xi_0) = \bar\xi_0,$ and $\sigma'(\bar\xi) = \sigma(\bar\xi)$ for all $\bar\xi\not\in\{\bar\xi_0, \bar\xi_1\}.$ It is easy to see that $\sigma' = (\bar\xi_0\leftrightarrow\bar\xi_2)\circ\sigma,$ where $\bar\xi_0\leftrightarrow\bar\xi_2$ denotes the transposition of the elements $\bar\xi_0$ and $\bar\xi_2.$ Since $\bar\xi_2\ne\bar\xi_0$ by our assumption, the parity of the number of cycles in the scheme $\sigma'$ differs from that for $\sigma.$ In the dual domain, we have $\Inv\sigma(\bar\xi_1^*) = \bar\xi_0,$ $\Inv\sigma(\bar\xi_0^*) = \bar\xi_2,$ 
$\Inv\sigma'(\bar\xi_1^*) = \bar\xi_2,$ $\Inv\sigma'(\bar\xi_0^*) = \bar\xi_0,$ and $\Inv\sigma(\bar\xi) = \Inv\sigma'(\bar\xi)$ for all $\bar\xi\not\in\{\bar\xi_0^*, \bar\xi_1^*\}$ (here we denoted $\bar\xi_0^* = \iota(\bar\xi_0)$ and $\bar\xi_1^* = \iota(\bar\xi_1)$). One can see that $\Inv\sigma' = (\bar\xi_0\leftrightarrow\bar\xi_2)\circ\Inv\sigma,$ and hence the parity of the number of cycles in $\Inv\sigma'$ differs from that for $\Inv\sigma.$ It therefore follows that the total number of cycles in the schemes $\sigma$ and $\Inv\sigma$ has the same parity as the total number of cycles in the schemes $\sigma'$ and $\Inv\sigma'.$

So we showed that the surgery described above preserves the parity of the total number of cycles in two mutually dual schemes. Applying it consecutively to every element of $\bar\A$ that the permutation does not send to itself, we will eventually obtain the identity permutation, and dual to the identity is nothing else but $\iota$ (it was defined at the beginning of this section). The number of cycles in the identity is $2d,$ while $\iota$ consists of $d$ cycles. Therefore, the parity of the total number of cycles in two mutually dual schemes is equal to the parity of $d.$

The second statement of this proposition follows from the first one due to the equality (\ref{eq:T + T* + N + N* = d}).

Proposition~\ref{prop:parity} is proved.
\end{proof}

In Sec.~\ref{sect:surfaces} below we show that a positive integer
\begin{gather*}
g = g(\sigma) = \frac{T(\sigma) + T(\Inv\sigma)}{2} + 1
\end{gather*}
is the genus of translation surfaces associated with an IRE with the scheme $\sigma.$

\begin{remark}
A very special class is formed by positive IRE schemes with zero twist total. We call such schemes \emph{rotational} because of their relation to irrational circle rotations. The rotational schemes can be equivalently viewed as (multi-segment) IET schemes such that their duals are IET schemes as well.
\end{remark}

\section{Natural extensions of IREs}
\label{sect:extension}
After we have shown the duality and time reverse symmetry on IRE schemes, the next logical step is to define an IER's \emph{natural extension} as a pair of two IERs $[(\sigma, \xect), (\sigma^*, \yect)]$ (or, in shorter notation, as a triple $(\sigma, \xect, \yect)$), where $\sigma^* = \Inv\sigma,$ $\xect\in X_\sigma,$ and $\yect\in X_{\sigma^*},$ with the following induction dynamics:
\begin{gather*}
\Pi_{\alpha\beta}^{\mathrm{rb}}\big[(\sigma, \xect), (\sigma^*, \yect)\big] = \big[\Pi_{\alpha\beta}^{\mathrm{rb}}(\sigma, \xect), (\Pi_{\beta\alpha}^{\mathrm{rb}})^{-1}(\sigma^*, \yect)\big],
\\
\Pi_{\alpha\beta}^{\mathrm{re}}\big[(\sigma, \xect), (\sigma^*, \yect)\big] = \big[\Pi_{\alpha\beta}^{\mathrm{re}}(\sigma, \xect), (\Pi_{\alpha\beta}^{\mathrm{lb}})^{-1}(\sigma^*, \yect)\big],
\\
\Pi_{\alpha\beta}^{\mathrm{lb}}\big[(\sigma, \xect), (\sigma^*, \yect)\big] = \big[\Pi_{\alpha\beta}^{\mathrm{lb}}(\sigma, \xect), (\Pi_{\alpha\beta}^{\mathrm{re}})^{-1}(\sigma^*, \yect)\big],
\\
\Pi_{\alpha\beta}^{\mathrm{le}}\big[(\sigma, \xect), (\sigma^*, \yect)\big] = \big[\Pi_{\alpha\beta}^{\mathrm{le}}(\sigma, \xect), (\Pi_{\beta\alpha}^{\mathrm{le}})^{-1}(\sigma^*, \yect)\big].
\end{gather*}
In view of Theorem~\ref{theorem:1}, this construction is well-defined 

Since the induction steps transform lengths independently of endpoints, we can also consider \emph{floating natural extensions} of IREs as pairs $[(\sigma, \vect), (\sigma^*, \wect)]$ (or, in shorter notation, as triples $(\sigma, \vect, \wect)$), where $\sigma^* = \Inv\sigma,$ $\vect\in V_\sigma,$ $\wect\in V_{\sigma^*}.$ The lengths are derived from the endpoints by the formulas (\ref{eq:lengths_relation}) and $w_\alpha = y_{\sigma(\alpha_\be)}-y_{\alpha_\be} = y_{\alpha_\en}-y_{\sigma(\alpha_\en)},$ $\alpha\in\A.$
The dynamics of induction are clear:
\begin{gather*}
\Pi_{\alpha\beta}^{\mathrm{rb}}\big[(\sigma, \vect), (\sigma^*, \wect)\big] = \big[\Pi_{\alpha\beta}^{\mathrm{rb}}(\sigma,\vect), (\Pi_{\beta\alpha}^{\mathrm{rb}})^{-1}(\sigma^*, \wect)\big],
\\
\Pi_{\alpha\beta}^{\mathrm{re}}\big[(\sigma, \vect), (\sigma^*, \wect)\big] = \big[\Pi_{\alpha\beta}^{\mathrm{re}}(\sigma, \vect), (\Pi_{\alpha\beta}^{\mathrm{lb}})^{-1}(\sigma^*, \wect)\big],
\\
\Pi_{\alpha\beta}^{\mathrm{lb}}\big[(\sigma, \vect), (\sigma^*, \wect)\big] = \big[\Pi_{\alpha\beta}^{\mathrm{lb}}(\sigma, \vect), (\Pi_{\alpha\beta}^{\mathrm{re}})^{-1}(\sigma^*, \wect)\big],
\\
\Pi_{\alpha\beta}^{\mathrm{le}}\big[(\sigma, \vect), (\sigma^*, \wect)\big] = \big[\Pi_{\alpha\beta}^{\mathrm{le}}(\sigma, \vect), (\Pi_{\beta\alpha}^{\mathrm{le}})^{-1}(\sigma^*, \wect)\big].
\end{gather*}

The \emph{area} of an IRE's natural extension is the real number
\begin{gather*}
S = \vect\cdot\wect = \sum_{\alpha\in\A}v_\alpha w_\alpha
\end{gather*}
that does not change under induction.

\begin{statement}\label{prop:area}
The elementary induction steps acting on IREs' natural extensions preserve their area.
\end{statement}

\begin{proof}
We carry out the proof for $\Pi_{\alpha\beta}^{\mathrm{lb}}$ (for a change); for three other steps it is similar.

Consider an arbitrary permutation $\sigma$ that satisfies $\sigma(\alpha\mathrm{b}) = \beta\mathrm{e},$ $\alpha\ne\beta$ (this is the condition of applicability of such induction step), and arbitrary vectors $\vect\in V_\sigma,$ and $\wect\in V_{\Inv\sigma}.$ Denote $(\sigma', \vect', \wect')  = \Pi_{\alpha\beta}^{\mathrm{lb}}(\sigma, \vect, \wect).$ As we already know from Sec.~\ref{sect:induction}, $\vect' = \Pi_{\alpha\beta}^{\mathrm{lb}}\vect$ means that $v'_\alpha = v_\alpha-v_\beta,$ and $v'_\xi = v_\xi$ for all $\xi\in\A\backslash\{\alpha\}.$ On another hand, $\Inv\sigma = \Pi_{\alpha\beta}^{\mathrm{re}}\Inv\sigma'$ by Theorem~\ref{theorem:1}, $(\Pi_{\alpha\beta}^{\mathrm{re}})^{-1}(\Inv\sigma, \wect) = (\Inv\sigma', \wect')$ by the definition of induction on natural extensions, and $\wect = \Pi_{\alpha\beta}^{\mathrm{re}}\wect'$ means that $w_\beta = w'_\beta-w'_\alpha,$ and $w_\xi = w'_\xi$ for all $\xi\in\A\backslash\{\beta\},$ and therefore, $w'_\beta = w_\beta + w_\alpha,$ $w'_\xi = w_\xi$ for all $\xi\in\A\backslash\{\beta\}.$ Hence, for the area we have
\begin{gather*}
S' 
= \vect'\cdot\wect' 
= \sum_{\xi\not\in\{\alpha, \beta\}}v'_\xi w'_\xi + v'_\alpha w'_\alpha + v'_\beta w'_\beta =
\\
= \sum_{\xi\not\in\{\alpha, \beta\}}v_\xi w_\xi + (v_\alpha-v_\beta)w_\alpha + v_\beta(w_\beta + w_\alpha) 
= \sum_{\xi\in\A}v_\xi w_\xi = S.
\end{gather*}

Proposition~\ref{prop:area} is proved.
\end{proof}

\section{Surfaces and flows associated with IREs}
\label{sect:surfaces}
M.~Veech in~\cite{Veech82} introduced a construction of a surface and a flow suspended over a (classical) IET This construction, known as ``zippered rectangles'', was afterwords routinely used without changes by many (see a well illustrated review of this theory in M.~Viana's paper~\cite{Viana06}, which is freely available on its author's homepage on the internet). We present our generalization of this classical construction and explain why it is even more natural than the original one.

Assume that we have a \emph{positive} extended IRE $(\sigma, \xect, \yect),$ i.e., both $(\sigma, \xect)$ and $(\sigma^*, \yect)$ are positive, where $\sigma^* = \Inv\sigma.$ Consider a disjoint set of $d$ rectangles $R_\xi  = [0, v_\xi]\times[0, w_\xi],$ $\xi\in\A.$ Identify their bottom an upper sides with the beginning and ending intervals $I_{\xi\be}$ and $I_{\xi\en},$ $\xi\in\A,$ of the IRE $(\sigma, \xect)$ respectively, and identify their left and right sides with the beginning and ending segments $I^*_{\xi\be}$ and $I^*_{\xi\en},$ $\xi\in\A,$ of the IRE $(\sigma^*, \yect)$ respectively.

In Sec.~\ref{sect:dynsys}, we described an algorithm transforming a positive IRE into a dynamical system on a tree, which included an arbitrary choice of $T(\sigma)$ number of branching points an certain subsegments. Apply this algorithm first to $(\sigma, \xect),$ and then to $(\sigma^*, \yect)$ (which includes choosing branching points in a number equal to the twist total of $\sigma$).

Now, glue the sides of the rectangles $R_\xi,$ $\xi\in\A,$ in accordance to how the segments in two IREs were glued. I.e., if a beginning point $x\in I_{\xi\be}$ is glued to an ending point $x'\in I_{\xi'\en}$ for some $\xi, \xi'\in\A,$ then the point $(x-x_{\xi\be}, 0)$ of the lower side of the rectangle $R_\xi$ gets glued to the point $(x'-x_{\sigma(\xi'\en)}, w_{\xi'})$ of the upper side of the rectangle $R_{\xi'};$ if a beginning point $y\in I^*_{\eta\be}$ is glued to an ending point of $y'\in I_{\eta'\en}$ for some $\eta, \eta'\in\A,$ then the point $(0, y-y_{\eta\be})$ of the left side of the rectangle $R_\eta$ gets glued to the point $(v_{\eta'}, y'-y_{\sigma^*(\eta'\en)})$ of the right side of the rectangle $R_{\eta'}.$

As a result of such operation, the interiors of the rectangles stay disjoint, while every point of their left sides is identified with some point of their right sides and vice versa, and every point of their lower sides is identified with some point of their upper sides and vice versa. Even more than this, all the sides of all the rectangles can be split into a final number of fragments (subsegments), each of whom is glued to an appropriate opposite side as a whole, i.e., by a parallel translation in the coordinate plain.

The topological surface that we obtain is a two-dimensional oriented manifold without boundary. Geometrically, it is a flat Riemann surface with $T(\sigma) + T(\sigma^*)$ singular points with complete angle $4\pi$ at each one. (This is for a generic case, however nothing prevents singular points from coinciding, and if some $k$ of them do coincide, then they merge into a single singular point with complete angle $2(k + 1)\pi$.) The genus  $g$ of this surface obviously satisfies
\begin{gather*}
2g-2 = T(\sigma) + T(\Inv\sigma),
\end{gather*}
as it was mentioned at the end of Sec.~\ref{sect:duality}.
(A reader can find in~\cite{Veech82} or~\cite{Viana06} more detailed explanation of properties of such surfaces in terms of Riemann manifolds and holomorphic 1-forms.)

On the constructed surface, consider two transversal flows: a vertical flow with a velocity vector parallel to $\overrightarrow{(0, 1)},$ and a horizontal flow with a velocity vector parallel to $\overrightarrow{(1, 0)}$ (both flows naturally branch at the singular points). Both flows can be taken with constant velocity or not, depending on the aims of research. By our construction, the horizontal sides of the rectangles are glued into the horizontal tree, and the dynamical system associated with the IRE $(\sigma, \xect)$ is exactly the Poincare section for the vertical flow described above across this transversal horizontal tree. Similarly, the vertical sides of the rectangles are glued into the vertical tree, and the dynamical system associated with the dual IRE $(\sigma^*, \yect)$ is the Poincare section for the horizontal flow across the transversalvertical tree. The vertical flow can be seen as a suspension flow over the interval exchange on the horizontal tree associated with the IRE $(\sigma, \xect)$, and the horizontal flow can be seen as a suspention flow over the interval exchange on the vertical tree associated with the dual IRE $(\sigma^*, \yect)$. 

Since a classical single-segment IET is a partial case of an IRE, our algorithm can be applied towards a classical IET as well. It is interesting to compare results of our algorithm with classical construction of Veech. Such comparison shows that the permutation (2.1) defined in~\cite{Veech82} describes in fact the dual permutation in our terms, and the vector of height $h$ in~\cite{Veech82} corresponds to our vector of lengths $\wect$ for the dual IRE. However, there is a major difference regarding specific restrictions imposed by Veech to his construction with no explanation whatsoever. In our terms: it is required that all branching points in every cycle must have the same coordinate, i.e., be glued into a single point.
(There is also a second restriction, but it is not that important: in the Veech's construction, one of the branching points must have coordinate $x_{\alpha\be},$ where $\alpha$ is the label of the leftmost beginning interval of the IET.) This unnatural restriction results, on the one hand, in that the number and multiplicity of singular points on the zippered rectangles surface is uniquely determined by the discrete component of the IET. On the other hand, such a restriction imposes corresponding restrictions on the vector of height: in our construction it can be any positive vector allowed by the dual scheme, while for Veech this vector must belong to certain narrowed cone in $\R_ + ^d$ (described in Sec.~3 of the paper~\cite{Veech82}), otherwise the rectangles cannot be zippered the way the author wants.

\subsection{Example}
\label{subsect:example}
For better understanding of the differences between our construction and the one by Veech, consider the most classical example: a single-segment IET that rearranges four itervals in opposite order, i.e. as
\begin{gather*}
\begin{pmatrix}
\alpha & \beta & \gamma & \delta
\\[\medskipamount]
\delta & \gamma & \beta & \alpha
\end{pmatrix}
\end{gather*}
in two-row notation. In terms of an IRE, its scheme $\sigma$ consists of a single cycle
\begin{gather*}
(\alpha\be, \beta\be, \gamma\be, \delta\be, \alpha\en, \beta\en, \gamma\en, \delta\en).
\end{gather*}
The dual scheme $\sigma^* = \Inv\sigma = \sigma\circ\iota$ also consists of a single cycle
\begin{gather*}
(\alpha\be, \beta\en, \gamma\be, \delta\en, \alpha\en, \beta\be, \gamma\en, \delta\be).
\end{gather*}
The latter cycle contains three turns back, therefore the dual scheme's number of twists $T(\sigma^*) = 2,$ and since the scheme $\sigma$ is untwisted, the twist total $T(\sigma) + T(\sigma^*)$ is~2 as well. Hence, after gluing of rectangles by our procedure, the surface obtained will have genus~2 and contain either two singular points with complete angles $4\pi$ at each one, or a single singular point with complete angle $6\pi.$

By Proposition~\ref{prop:V_dimension}, for both schemes $\sigma$ and $\sigma^*$ all vectors of lengths are allowed, so let $\vect$ and $\wect$ be arbitrary positive vectors. An arbitrary choice of coordinates $x_{\alpha\be}$ and $y_{\alpha\be}$ uniquely determines positive IREs $(\sigma, \xect)$ and $(\sigma^*, \yect)$ with the vectors of lengths $\vect$ and $\wect.$ (Alternatively, one may consider floating IREs $(\sigma, \vect)$ and $(\sigma^*, \wect)$ without fixed coordinates, and with fixed lengths only.) Glue these two IREs into trees according to the algorithm from Sec.~\ref{subsect:algorithm}.

For the untwisted IRE $(\sigma, \xect)$, we simply glue each point of every beginning interval to the corresponding unique point of an ending segment with the same coordinate. In this case, the tree is a segment.

For the twisted dual IRE $(\sigma^*, \yect)$ we start by considering the closed polygonal chain with vertices at the points of turns: $(y_{\delta\be}, y_{\beta\en}, y_{\gamma\be}, y_{\delta\en}, y_{\beta\be}, y_{\gamma\en}, y_{\delta\be}).$ It consists of six consecutive segments of lengths $w_\alpha + w_\delta,$ $w_\beta,$ $w_\gamma,$ $w_\alpha + w_\delta,$ $w_\beta,$ $w_\gamma.$ For definiteness, assume that $w_\beta$ is the shortest one of these lengths (in other cases, the algorithm acts similarly, with respective changes). Choose an arbitrary point $c_1\in[y_{\gamma\be}, y_{\beta\en}]$ (this is the shortest segment; there are two of them in fact, both of length $w_\beta,$ and it does not matter, with which one we start). Glue each point of the interval $I^*_{\beta\en}$ lying to the left of $c_1$ to the point of the interval $I^*_{\gamma\be}$ with the same coordinate, and glue each point of the interval $I^*_{\beta\en}$ lying to the right of $c_1$ to the point of the intervals $I^*_{\delta\be}$ or $I^*_{\alpha\be}$ with the same coordinate. Having excluded the glued points from our consideration, we are left with the four-segment closed polygonal chain $(y_{\delta\be}, y_{\delta\en}, y_{\beta\be}, y_{\gamma\en}, y_{\delta\be})$. Choose an arbitrary point $c_2\in[y_{\beta\be}, y_{\gamma\en}]$ (this is the second one of the two shortest segments of length $w_\beta$ in the polygonal chain we started with). Glue each point of the interval $I^*_{\beta\be}$ lying to the left of $c_2$ to the point of the intervals $I^*_{\delta\en}$ or $I^*_{\alpha\en}$ with the same coordinate, adn glue each point of the interval $I^*_{\beta\be}$ lying to the left of $c_2$ to the point of the interval $I^*_{\gamma\en}$ with the same coordinate. The points still unglued form the two-segment closed polygonal chain $(y_{\delta\be}, y_{\delta\en}, y_{\delta\be}),$ so we finish the process by gluing each remaining beginning point to the ending point with the same coordinate.

Now, in the case of $c_1 = c_2 = x_{\alpha\be},$ we get exactly the zippered rectangles described by Veech. Of course, this equality requires additional restrictions imposed on $\wect$, namely, $w_\alpha\le w_\beta\le w_\alpha + w_\gamma$ and $w_\delta\le w_\gamma\le w_\delta + w_\beta.$ The resulting surface (visualized on Fig.~25 in the paper~\cite{Viana06}) has a single singular point with complete angle $6\pi$.

If $c_1\ne c_2,$ then our algorithm produces a geometrically different surface of the same genus. The segments of the dual IRE are glued in different ways depending on the actual ordering of their endpoints on the coordinate axis (this ordering is determined by the vector of lengths) and our choice of the points $c_1$ and $c_2$ during the algorithm.

As an illustration, let us consider one of several possible orderings of the points, namely, the case $y_{\delta\be}<y_{\beta\be}<y_{\gamma\be}<y_{\alpha\en}<y_{\alpha\be}<y_{\gamma\en}<y_{\beta\en}<y_{\delta\en}$ (one can easily derive the corresponding set of inequalities for the lengths $w_\alpha,$ $w_\beta,$ $w_\gamma,$ and $w_\delta,$ using the representation (\ref{eq:lengths-to-endpoints}) of endpoints in terms of the lengths), and assume that in the algorithm we choose $c_1\in(y_{\alpha\be}, y_{\gamma\en})$ and $c_2\in(y_{\beta\be}, y_{\gamma\be}).$ Then, step by step of the procedure, we make the following identifications.

On the first step of the algorithm, we choose $c_1\in(y_{\alpha\be}, y_{\gamma\en})\subset[y_{\gamma\be}, y_{\beta\en}].$ Then, over the segment $[c_1, y_{\beta\en}]$, the ending interval $I^*_{\beta\en}$ is glued to the beginning interval $I^*_{\alpha\be},$ while over $[y_{\gamma\be}, c_1]$, the interval $I^*_{\beta\en}$ is glued to $I^*_{\gamma\be}.$

On the second step, we choose $c_2\in(y_{\beta\be}, y_{\gamma\be})\subset[y_{\beta\be}, y_{\gamma\en}].$ Then, over the segment $[c_2, y_{\gamma\en}]$, the interval $I^*_{\beta\be}$ is glued to $I^*_{\gamma\en},$ while over $[y_{\beta\be}, c_2]$, the interval $I^*_{\beta\be}$ is glued to $I^*_{\alpha\en}.$

At the final step, the rest of identifications is made:
\begin{enumerate}
\item[]
over $[y_{\delta\be}, c_2]$, the interval $I^*_{\delta\be}$ is glued to $I^*_{\gamma\en},$

\item[]
over $[c_2, y_{\alpha\en}]$, the interval $I^*_{\delta\be}$ is glued to $I^*_{\alpha\en},$

\item[]
over $[y_{\alpha\en}, y_{\alpha\be}]$, the interval $I^*_{\delta\be}$ is glued to $I^*_{\delta\en},$

\item[]
over $[y_{\alpha\be}, c_1]$, the interval $I^*_{\alpha\be}$ is glued to $I^*_{\delta\en},$ 

\item[]
over $[c_1, y_{\delta\en}]$, the interval $I^*_{\gamma\be}$ is glued to $I^*_{\delta\en}.$
\end{enumerate}

At the branching point $c_1$, the four intervals $I^*_{\alpha\be},$ $I^*_{\gamma\be},$ $I^*_{\beta\en},$ and $I^*_{\delta\en}$ meet together, while at the branching point $c_2$, the four intervals $I^*_{\beta\be},$ $I^*_{\delta\be},$ $I^*_{\gamma\en},$ and $I^*_{\alpha\en}$ meet.

Finally, we can move to the rectangles $R_\xi,$ $\xi\in\{\alpha, \beta, \gamma, \delta\},$ and describe how their vertical sides, which are associated with the corresponding beginning intervals $I^*_{\xi\be}$ and ending intervals $I^*_{\xi\en}$ of the dual IRE $(\sigma^*, \yect),$ are glued to each other.

The left side of the rectangle $R_\alpha,$ which is the segment $\{0\}\times[0, w_\alpha],$ is split into two fragments:
the lower fragment $\{0\}\times[0, c_1-y_{\alpha\be}]$ is glued to the fragment $\{v_\delta\}\times[y_{\alpha\be}-y_{\alpha\en}, c_1-y_{\alpha\en}]$ of the right side of the rectangle~$R_\delta,$
%
while the upper fragment $\{0\}\times[c_1-y_{\alpha\be}, w_\alpha]$ is glued to the fragment $\{v_\beta\}\times[c_1-y_{\gamma\be}, w_\beta]$ of the right side of $R_\beta.$

The left side of $R_\beta,$ which is the segment $\{0\}\times[0, w_\beta],$ is split into two fragments:
the lower fragment $\{0\}\times[0, c_2-y_{\beta\be}]$ is glued to the fragment $\{v_\alpha\}\times[0, c_2-y_{\beta\be}]$ of the right side of~$R_\alpha,$
%
while the upper fragment $\{0\}\times[c_2-y_{\beta\be}, w_\beta]$ is glued to the fragment $\{v_\gamma\}\times[c_2-y_{\delta\be}, w_\gamma]$ of the right side of~$R_\gamma.$

The left side of $R_\gamma,$ which is the segment $\{0\}\times[0, w_\gamma],$ is split into two fragments:
the lower fragment $\{0\}\times[0, c_1-y_{\gamma\be}]$ is glued to the fragment $\{v_\beta\}\times[0, c_1-y_{\gamma\be}]$ of the right side of~$R_\beta,$
%
while the upper fragment $\{0\}\times[c_1-y_{\gamma\be}, w_\gamma]$ is glued to the fragment $\{v_\delta\}\times[c_1-y_{\alpha\en}, w_\delta]$ of the right side of~$R_\delta.$

The left side of $R_\delta,$ which is the segment $\{0\}\times[0, w_\delta],$ is split into three fragments:
the lower fragment $\{0\}\times[0, c_2-y_{\delta\be}]$ is glued to the fragment $\{v_\gamma\}\times[0, c_2-y_{\delta\be}]$ of the right side of~$R_\gamma,$ 
%
the middle fragment $\{0\}\times[c_2-y_{\delta\be}, y_{\alpha\en}-y_{\delta\be}]$ is glued to the fragment $\{v_\alpha\}\times[c_2-y_{\beta\be}, w_\alpha]$ of the right side of~$R_\alpha,$
%
while the upper fragment $\{0\}\times[y_{\alpha\en}-y_{\delta\be}, w_\delta]$ is glued to the fragment $\{v_\delta\}\times[0, y_{\alpha\be}-y_{\alpha\en}]$ of the right side of $R_\delta.$

It is easy to check that the nine above-listed fragments of the right sides of rectangles cover those sides with no overlapping (beyond their endpoints) or gaps between them.

After the upper sides of the rectangles are glued to the lower sides as well, in accordance with the classical IET under consideration, we obtain a manifold of genus~2, which is, geometrically, a flat surface with two singular points. The first one corresponds to the chosen branching point $c_1$ and is the result of identification of the four points $(0, c_1-y_{\alpha\be})\in R_\alpha,$ $(v_\beta, c_1-y_{\gamma\be})\in R_\beta,$ $(0, c_1-y_{\gamma\be})\in R_\gamma,$ and $(v_\delta, c_1-y_{\alpha\en})\in R_\delta.$ The second singular point corresponds to $c_2$ from our algorithm and results from gluing the four points $(v_\alpha, c_2-y_{\beta\be})\in R_\alpha,$ $(0, c_2-y_{\beta\be})\in R_\beta,$ $(v_\gamma, c_2-y_{\delta\be})\in R_\gamma$, and $(0, c_2-y_{\delta\be})\in R_\delta$ into one as well. The complete angle at each of these two singular pooints is equal to $4\pi.$

In the review~\cite{Viana06} (which is, as we remarked earlier, freely available on the internet), one can find an illustrated description of Veech's construction for the same example and thus visualize the difference between two algorithms.

\section{Conclusions}
\label{sect:conclusions}
In the present paper, we introduced the new concept of an interval rearrangement ensemble (IRE), which is a generalization of the classical construction of an interval exchange transformation (IET). Unlike the latter one, an IRE is not, generally speaking, a dynamical system, although it can be transformed into one by gluing its intervals into a tree, with several degrees of freedom in choice of its parameters.

Our construction is very natural and was overlooked by other researches maybe because of they restricted themselves to studying single-segment IETs. It was almost realized by Veech in~\cite{Veech82} in investigating vertical sides of ``zippered rectangles'', however their configuration was not recognized as dual to the initial IET on horizontal sides.

An induction of Rauzy\,--\,Veech type can be applied to IREs (and a renormalization can be defined based of this induction). Elementary induction steps on schemes (which is our term for the discrete components of IREs) are conjugate with inverse steps by a simple involution that defines the duality between schemes. A natural extension of an IRE can be seen as a pair of IREs with mutually dual schemes, and elementary induction steps on natural extensions are invertible as well and conjugate to inverse steps by an involution that simply swaps the elements of such a pair.

For a given natural extension of an IRE, when we take a set of parallel rectangles with sizes determined by the lengths of intervals in the mutually dual IREs forming this natural extension, glue each of these IREs into a tree and then glue the sides of the rectangles acoordingly, we obtain a flat translation surface with a certain number of singular points. On this surface, two transversal flows of constant directions are determined. Each of these flows is parallel to one of the two trees glued up from the sides of the rectangles and produces a Poincare section on the second tree. This Poincare section is nothing else but the dynamical system the corresponding IRE was glued into---an interval exchange on a tree.

In the special case of a rotational IRE (i.e., a multi-segment IET, which is a first return map for an irrational circle rotation onto a finite union of arcs), the dual IRE is rotational as well, and any corresponding translation surface has genus~$1,$ i.e., it is a torus without singular points.

\end{document}